\newcommand{\be}{\begin{equation}}
\newcommand{\ee}{\end{equation}}
\newcommand{\beq}{\begin{eqnarray}}
\newcommand{\eeq}{\end{eqnarray}}
\newtheorem{thm}{Theorem} [section]
\newtheorem{lem}[thm]{Lemma}
\theoremstyle{definition}
\theoremstyle{remark}
\newtheorem{rmk}[thm]{Remark}
\numberwithin{equation}{section}
\DeclareMathOperator{\ord}{ord}
\DeclareMathOperator{\Fil}{Fil}
\begin{document}
\title[Infinite symmetric power $L$-functions of the hyper-Kloosterman family]
{Infinite symmetric power $L$-functions of the hyper-Kloosterman family}
\begin{abstract}
The infinite symmetric power $L$-functions play a fundamental role in Wan's groundbreaking work on Dwork's conjecture. Building upon this foundation, Haessig established the $p$-adic estimates for these $L$-functions in the case of the one-dimensional Kloosterman family.
In this paper, we extend Haessig’s results by deriving a uniform lower bound for the $q$-adic Newton polygon of the infinite symmetric power $L$-functions associated with the hyper-Kloosterman family.

\end{abstract}

\author[B.L. Wei]{Bolun Wei}
\address{Institute for Math \& AI, Wuhan, Wuhan University, Wuhan 430072, P.R.China}
\email{bolunwei@whu.edu.cn}
\author[L.P. Yang]{Liping Yang}
\address{School of Mathematical Sciences, Chengdu University of Technology, Chengdu 610059, P.R. China}
\email{yanglp2013@126.com}

%\thanks{
%L.P. Yang is supported by National Natural Science Foundation of China (Grant No. 12201078).
%}
%\date{\today}
\keywords{Symmetric power, Dwork theory, 
hyper-Kloosterman family.}
\subjclass[2010]{Primary 11T23, 11S40. }
\maketitle
\section{Introduction}
The hyper-Kloosterman sums are exponential sums associated to the $n$-variables Laurent polynomial
$$f(t,x)=x_1+\cdots+x_n+\frac{t}{x_1\cdots x_n},$$
where $t$ is a parameter. As $t$ varies, this defines a family of exponential sums.  One important approach to studying this family is through symmetric power 
$L$-functions, which we now describe. 

Let $p$ be a prime number and $\mathbb{F}_q$ be the finite field of $q=p^a$ elements, where $a$ is a non-negative integer and ${\rm char}(\mathbb{F}_q)=p$.
Denote by $\bar{\mathbb{F}}_q$ the algebraic closure of $\mathbb{F}_q$. For each $\bar{t}\in \bar{\mathbb{F}}_q$, let $\deg(\bar{t})=[\mathbb{F}_q(\bar{t}):\mathbb{F}_q]$.
%One can define the exponential sums of the polynomial $f(\bar{t},x)$ as follows.
Fix a primitive $p$-th root of unity $\zeta_p$, and let $\psi$ be the non-trivial additive character on $\mathbb F_q$ defined by
$$\psi(\bar a)=\zeta_p^{\text{Tr}_{\mathbb F_q/\mathbb F_p}(\bar a)},\ {\rm for}\ \bar a\in \mathbb F_q.$$
For any positive integers $m$, define the hyper-Kloosterman sum ${\rm Kl}_{n}(\bar{t},m)$ by
$${\rm Kl}_{n}(\bar{t},m):=\sum_{ x\in (\mathbb F_{q_{\bar t}^m}^\ast)^n} \psi\circ \text{Tr}_{\mathbb F_{q_{\bar t}^m}/\mathbb F_q}(x_1+\cdots+x_n+\frac{\bar{t}}{x_1\cdots x_n}),$$
where $ q_{\bar{t}}:=q^{\text{deg}(\bar{t})}$.
According to\cite{SP0,SP1}, the generating $L$-function 
$$L(\bar{t},n,T):=\exp\Big(\sum_{m=1}^{\infty}{\rm Kl}_{n}(\bar{t},m)\frac{T^m}{m}\Big) $$
can be written as
$$L(\bar{t},n,T)^{(-1)^{n+1}}=(1-\pi_0(\bar{t})T)\cdots (1-\pi_n(\bar{t})T). $$
If we order the reciprocal roots $\pi_0(\bar{t}),\ldots, \pi_n(\bar{t})$ so that 
$${\rm ord}_{q^{\deg(\bar{t})}}\pi_j(\bar{t})\le {\rm ord}_{q^{\deg(\bar{t})}}\pi_{j+1}(\bar{t}),$$ then 
for $j=0,\cdots,n$, we have
$${\rm ord}_{q^{\deg(\bar{t})}}\pi_j(\bar{t})=j$$
and $\pi_0(\bar{t})$ is a $p$-adic 1-unit, which means that $|1-\pi_0(\bar{t})|_p<1$.

Let $k$ be a positive integer. Define the $k$-th symmetric power $L$-function of the hyper-Kloosterman family by
$$L(Sym^k {\rm Kl}_{n},T):=\prod_{\bar{t}\in |\mathbb{G}_m/\mathbb{F}_q|}\prod_{i_0+\cdots+i_n=k}\frac{1}{1-\pi_0(\bar{t})^{i_0}\cdots\pi_n(\bar{t})^{i_n}T^{\deg(\bar{t})}},$$
where the first product runs over all closed points of the algebraic torus over $\mathbb{F}_q$, and  $i_j$ is nonnegative integer for all $j=0,\ldots,n$.

Robba \cite{Ro86} first studied this $L$-function for $n=1$ using $p$-adic methods, and showed that it is a polynomial over $\mathbb{Z}$. He further conjectured a formula for its degree, which was later proved by Fu and Wan \cite{FW1}. Moreover, they provided an explicit degree formula for the $L$-function associated with the hyper-Kloosterman family by using $l$-adic methods.
In the same work, they raised the question of whether a uniform lower bound exists for the Newton polygon of
$L(Sym^k {\rm Kl}_{n},T)$, independent of $k$.
Haessig \cite{H17} answered this question for the case $n=1$ by establishing such a uniform lower bound. Recently, Fres\'{a}n-Sabbah-Yu showed that this bound can be improved and that it arises from a related exponential mixed Hodge structure, yielding a `Newton above Hodge' phenomenon.
Under certain assumptions, Haessig and Sperber \cite{HS24} proved that the $L$-function $L(Sym^k {\rm Kl}_{n},T)$ for the hyper-Kloosterman family
is a polynomial and also admits a uniform lower bound. Recently, Qin \cite{Qin24}, \cite{Qin24-2} computed the Hodge number for the symmetric power hyper-Kloosterman family via the irregular Hodge theory.

By taking the limit of the $k$-th symmetric power $L$-function, we define the infinite symmetric power $L$-function as follows. Let $\kappa\in \mathbb{Z}_p$, where $\mathbb{Z}_p$ denotes the ring of $p$-adic integers.
The infinite $\kappa$-symmetric power $L$-function is defined by 
 $$L( Sym^{\kappa,\infty}{\rm Kl}_n,T):=\prod_{\bar{t}\in |\mathbb{G}_m/\mathbb{F}_q|} \prod \frac{1}{1-\pi_0(\bar{t})^{\kappa-(i_{1}+\cdots+i_{n})}\pi_{1}(\bar{t})^{i_{1}}\cdots\pi_{n}(\bar{t})^{i_{n}}T^{\deg(\bar{t})}},$$
 where the second product runs over all non-negative integer tuples $i_{1},\cdot\cdot\cdot,i_{n}$.
 Let $\{k_s\}$ be a sequence of positive integers tending to infinite and satisfying  $\lim_{s\rightarrow\infty} k_{s}=\kappa$ $p$-adically.
 Then the following limit holds:
  $$ \lim_{s\rightarrow \infty}L(Sym^{k_s} {\rm Kl}_{n},T)=L( Sym^{\kappa,\infty}{\rm Kl}_n,T).$$

Our study of the infinite symmetric power 
$L$-function is motivated by Dwork's conjecture \cite{Dw73}.
For $\kappa\in \mathbb{Z}_p$, the unit root $L$-function is defined by
  $$L_{unit}(\kappa, T):=\prod_{\bar{t}\in |\mathbb {G}_m/\mathbb{F}_q|}\frac{1}{1-\pi_0(\bar{t})^{\kappa}T^{\deg(\bar{t})}},$$
where $\bar{t}$ runs over all closed points of $\mathbb {G}_m/\mathbb{F}_q$.
 Dwork \cite{Dw73} conjectured that this unit root $L$-function is meromorphic.
Wan's theorems \cite{Wan99} established its meromorphy. In Wan's proof, a key insight is to express the unit root $L$-function as a product of infinite symmetric power $L$-functions. 
It is well known that the unit root $L$-function does not admit a nice cohomology theory.
However, the infinite symmetric power $L$-function  may have one. For the $1$-dimensional case, Haessig \cite{H17} showed that a $p$-adic cohomology theory does exist for the infinite symmetric power $L$-function. By applying the Frobenius  endomorphism to this cohomology, he derived a uniform lower bound for the corresponding $L$-function. Moreover, he obtained an explicit form of the first pole of the unit root $L$-function. In this paper, we generalize Haessig's work to the hyper-Kloosterman family by providing a cohomological description of the infinite symmetric power $L$-function and proving a uniform lower bound for it. To describe the lower bound, we first introduce the following notations.

 Set $$R(T):=\sum_{N=0}^{\infty}\sum_{i_2+\cdots+i_n=N}T^{\sum_{j=2}^nj\cdot i_j}.$$
Write $$R(T)/(1-T^{n+1})=\sum_{i=0}^{\infty}h_i(n)T^i.$$ 
Define the Hodge polygon as the lower convex hull of the points $(0,0)$ and $$(\sum_{i=0}^Nh_i,\sum_{i=0}^N(1-\frac{1}{p-1})ih_i)$$ for $N=0,1,2,\ldots$, which coincides with the $q$-adic Newton polygon of $$\prod(1-q^{(1-\frac{1}{p-1})i}T)^{h_i(n)}.$$
We then prove a `Newton above Hodge' property for the infinite symmetric power $L$-function.
That is,
\begin{thm}\label{thm011}
Let $p$ be an odd prime. For $\kappa \in \mathbb{Z}_p$, the $q$-adic Newton polygon of $L( Sym^{\kappa,\infty}{\rm Kl}_n,T))$ lies on or above the Hodge polygon.
 \end{thm}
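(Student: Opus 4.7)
The plan is to construct a $p$-adic cohomology for $L(Sym^{\kappa,\infty}{\rm Kl}_n, T)$ in the spirit of Dwork, Wan and Haessig, express the $L$-function (up to trivial factors) as the characteristic power series of a completely continuous Frobenius on this cohomology, and then bound the $q$-adic valuations of the Frobenius matrix entries in an explicit monomial basis so that a standard Newton-above-Hodge comparison yields the theorem. The starting point is to fix a Dwork splitting function $\theta$ and attach to $f(t,x)=x_1+\cdots+x_n+t/(x_1\cdots x_n)$ the relative Dwork complex over $\mathbb{G}_m/\mathbb{F}_q$; the fiberwise Frobenius is then completely continuous and its characteristic power series reproduces $L(\bar t,n,T)^{(-1)^{n+1}}$. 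Applying Wan's unit-root decomposition in this relative setting splits the overconvergent $F$-crystal into a slope filtration whose graded pieces are a rank-one unit-root isocrystal $\mathcal{U}$ on $\mathbb{G}_m$ corresponding to $\pi_0(\bar t)$ and $n$ further line bundles $\mathcal{L}_1,\ldots,\mathcal{L}_n$ whose relative Frobenius has generic Newton slopes $1,\ldots,n$.

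For each tuple of non-negative integers $(i_1,\ldots,i_n)$ I would then form the rank-one $F$-isocrystal $\mathcal{U}^{\kappa-(i_1+\cdots+i_n)}\otimes\mathcal{L}_1^{\otimes i_1}\otimes\cdots\otimes\mathcal{L}_n^{\otimes i_n}$, using overconvergence of $(1+x)^\kappa$ to make sense of the $\mathbb{Z}_p$-power of the unit-root piece. By construction, the Euler product defining $L(Sym^{\kappa,\infty}{\rm Kl}_n, T)$ is precisely the product, over all such tuples, of the $L$-functions of these line bundles. Following Wan's construction from his proof of Dwork's conjecture and Haessig's refinement in the case $n=1$, I would assemble these rank-one pieces into a single infinite-rank nuclear $F$-isocrystal on $\mathbb{G}_m$ and realize its cohomology as a countable-rank Banach module $B_\kappa$ equipped with a completely continuous Frobenius $\alpha_\kappa$ whose characteristic power series computes $L(Sym^{\kappa,\infty}{\rm Kl}_n, T)$.

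The last step is the slope estimate. I would choose a monomial basis of $B_\kappa$ indexed so that its graded dimensions, counted by the weight $\sum_{j=2}^n j\cdot i_j$ coming from the tensor factors $\mathcal{L}_j$ together with an auxiliary degree coming from the underlying $(n+1)$-dimensional fiber Dwork cohomology, exactly match the coefficients $h_i(n)$ of $R(T)/(1-T^{n+1})$. Standard $p$-adic estimates for the coefficients of the Artin--Hasse series defining $\theta$ then force every entry of $\alpha_\kappa$ to have $q$-adic order at least $(1-1/(p-1))$ times the weight of its column, uniformly in $\kappa\in\mathbb{Z}_p$, which is exactly the Hodge polygon in the theorem; the classical Newton-above-Hodge inequality for characteristic power series of completely continuous operators then closes the argument. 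The main obstacle, as already for $n=1$ in Haessig's work, is the construction of the infinite-symmetric-power cohomology itself: the genuinely new feature in the hyper-Kloosterman setting is that the non-unit-root part has rank $n$ rather than $1$, so one must simultaneously interlace contributions from the $n$ distinct geometric slopes $1,\ldots,n$ while keeping all estimates uniform in $\kappa$, which is precisely why the weighted exponent $\sum_{j=2}^n j\cdot i_j$ appears in the definition of $R(T)$.
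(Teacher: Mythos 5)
Your high-level architecture is the right one and matches the paper's: build a Dwork-style relative cohomology for the family, form an ``infinite symmetric power'' Banach module with a completely continuous Frobenius, identify $L(\mathrm{Sym}^{\kappa,\infty}\mathrm{Kl}_n,T)$ with a Fredholm determinant on that module, and then extract the Hodge bound from entrywise $p$-adic estimates on the Frobenius matrix in a weighted monomial basis. The identification of the weighted counting series with $R(T)/(1-T^{n+1})$ is also correct. However, the proposal passes over precisely the step that carries nearly all of the paper's content: you write ``realize its cohomology as a countable-rank Banach module $B_\kappa$'' as though this were routine, but the paper's Section 3 exists entirely to prove that $H^0(\mathcal{S}(b,c),\partial)=0$ and $H^1(\mathcal{S}(b,c),\partial)\cong\mathcal{R}(b,c)$ --- i.e.\ that the cohomology is free with basis $\{\gamma^{(n+1)r}t^r e^{\underline{i}}_\kappa\}$ restricted to $i_1=0$. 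That restriction to $i_1=0$ is exactly what makes the graded dimensions come out to $h_i(n)$ with the exponent $\sum_{j\ge 2}j\,i_j$, and it is not a formal consequence of Wan's slope filtration: the paper has to introduce a simplified connection $\partial^{(1)}$, prove an intricate combinatorial induction showing $t^m\mid\zeta_{\underline{i}}$ for all $m$ (Lemmas 3.1--3.2), establish the direct sum decomposition $\mathcal{S}=\mathcal{R}\oplus\partial^{(1)}\mathcal{S}$ (Theorem 3.4), and then run a convergent perturbation argument using a filtration by weight to transfer the result from $\partial^{(1)}$ to the true connection $\partial$ (Lemma 3.6, Theorems 3.7--3.8). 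Without an argument at this level, there is no basis of $B_\kappa$ to estimate against and no reason the $L$-function should be a single determinant rather than a ratio involving $H^0$.

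Two further points. First, the entrywise estimate $\ord_p B^{\infty,\kappa}((s,\underline j);(r,\underline i))\ge(1-\tfrac{1}{p-1})W(s;\underline j)$ is not a ``standard Artin--Hasse estimate'': it requires the sharpened Frobenius matrix estimate on the rank-$(n+1)$ family cohomology (Theorem 2.1, due to Adolphson--Sperber and Haessig--Sperber, with the crucial normalization $A(0,0)\equiv 1\bmod(\gamma,t)$ so that $\bar\alpha_1(e_0)^\kappa$ makes sense for $\kappa\in\mathbb{Z}_p$ and retains the right valuation), a Kummer-type bound on the $\kappa$-binomial coefficients appearing in $[\bar\alpha_1]_{\infty,\kappa}$, and then a careful re-reduction of the image to the $\mathcal{R}(b,c)$-basis that does not lose valuation --- the latter leaning on the explicit form of the cohomological splitting and on a further estimate from Haessig--Sperber's $\Fil^N$-analysis (Lemma 3.6 in the paper). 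Second, your framing via Wan's slope filtration splitting the $F$-crystal into rank-one line bundles $\mathcal{U},\mathcal{L}_1,\dots,\mathcal{L}_n$ and tensoring them is a legitimate heuristic for why the Euler product has the stated shape, but the paper does not actually perform such a splitting; it works directly with the full $(n+1)$-dimensional cohomology basis $\{e_i\}$ and builds $\mathcal{S}(b,c)$ out of the formal symbols $e_\kappa^{\underline i}=\kappa^{\underline i}e_1^{i_1}\cdots e_n^{i_n}$, with the connection acting by the Leibniz rule. The slope filtration is only a filtration, not a direct sum, so one cannot literally tensor the graded pieces and hope to recover the right cohomology with its weights; what replaces that is the hands-on computation of $H^1$.
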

 As an application of Theorem \ref{thm011}, we derive a uniform lower bound for the $k$-th symmetric power $L$-function $L(Sym^k {\rm Kl}_n,T)$ on those segments with slopes $\le k$.

 This paper is organized as follows. In section 2, we review the construction of the $p$-adic relative cohomology associated to hyper-Kloosterman sum.
In section 3, we construct a $p$-adic relative cohomology to express the infinite symmetric power $L$-function.
In section 4, we give the proof of Theorem \ref{thm011}. In section 5, we use Theorem \ref{thm011} to give a uniform lower bound for the $k$-th symmetric power $L$-function.

\section {$p$-adic relative cohomology}

Recall that $p$ is a prime and $q=p^a$ with $a\ge 0$. Let $\mathbb{Q}_q$ be the unramified extension of $\mathbb{Q}_p$ of degree $a$, and $\mathbb{Z}_q$ be its ring of integers. Let $E(T):=\exp(\sum_{j=0}^{\infty}\frac{T^{p^j}}{p^j})$ be the Artin-Hasse series, and let $\gamma\in \bar{\mathbb{Q}}_p$ be a root of
$\sum_{j=0}^{\infty}\frac{T^{p^j}}{p^j}$ such that ${\rm ord}_p \gamma=1/(p-1).$ Fix a $q$-th root of $\gamma$, and denoted by $\mathcal{D}_q$ the ring of integers of $\Omega:=\mathbb{Q}_q(\gamma^{1/q})$.

Let $b,b'$ be real numbers such that $\frac{1}{p-1}< b,b'\le \frac{p}{p-1}$.  For $\rho\in \mathbb{R}$, we define
 $$\mathcal{L}_q(b;\rho):=\{\sum_{r=0}^{\infty} a(r)t^r|a(r)\in\Omega, {\rm ord}_p a(r)\ge \frac{b(n+1)r}{q}+\rho\}$$
 and
$$\mathcal{L}_q(b):=\bigcup_{\rho\in\mathbb{R}}\mathcal{L}_q(b;\rho).$$
 For $\mathbf{v}=(v_1,\ldots,v_n)\in \mathbb{Z}^n$, define $m(\mathbf{v}):=\max \{0,-v_1,\cdots,-v_n \}$ and  $w(\mathbf{v}):=\sum_{i=1}^n v_i+(n+1)m(\mathbf{v})$.
 Define
 $$\mathcal{K}_q(b,b';\rho):=\{\sum_{r\geq 0,\mathbf{v}\in \mathbb{Z}^n} a(r,\mathbf{v})t^{r+qm(\mathbf{v}) }x^{\mathbf{v}}|a(r,\mathbf{v})\in \Omega, {\rm ord}_p a(r,\mathbf{v})\ge \frac{b(n+1)r}{q}+b'w(\mathbf{v})+\rho \} $$
 and
 $$\mathcal{K}_q(b,b'):=\bigcup_{\rho\in \mathbb{R}}\mathcal{K}_q(b,b';\rho).$$
 For simplicity, when $q=1$, we write $\mathcal{L}_q(b)$ and $K_q(b,b')$ as $\mathcal{L}(b)$ and $K(b,b')$, respectively.

 The Dwork's infinite splitting function is defined as $\theta(T):=E(\gamma T)=\sum_{j=0}^{\infty}\theta_jT^j$. It is well known that ${\rm ord}_p(\theta_j)\ge j/(p-1)$.
 Define $$F(t,x):=\theta(x_1)\cdots \theta(x_n) \theta(t/x_1\cdots x_n)$$
 and
 $$K(t,x):=\prod_{j=0}^{\infty}F(t^{p^j},x^{p^j}).$$
 For $i\ge 0$, let $\gamma_i:=\sum_{j=0}^i\gamma^{p^j}/p^j=-\sum_{j=i+1}^{\infty}\gamma^{p^j}/p^j$.
 The latter shows that ${\rm ord}_p(\gamma_i)=\frac{p^{i+1}}{p-1}-(i+1)$.
 Let $H(t,x):=\sum_{i=0}^{\infty}\gamma_if(t^{p^i},x^{p^i})$. Then $K(t,x)=\exp H(t,x)$.
For $i=1,\cdots,n$, we define the following differential operators acting on $\mathcal{K}_q(b/q,b')$ by
\begin{align*}
D_{i,t^q}&:=x_i\frac{\partial}{\partial x_i}+x_i\frac{\partial H(t^q,x)}{\partial x_i}\\
&=\frac{1}{K(t^q,x)}\circ x_l\frac{\partial}{\partial x_l}\circ K(t^q,x).
\end{align*}

As $D_{i}$ commutes with each other, one constructs a Koszul complex as follows:
let $\Omega^{0}(\mathcal{K}_q(b,b'),\partial_{t}):=\mathcal{K}_q(b,b')$ and for each $1\le i\le n$, let
$$\Omega^{i}(\mathcal{K}_q(b,b'),D_{t^q}):=\bigoplus_{1\le j_1<j_2<\cdots<j_i\le n}\mathcal{K}_q(b,b') \frac{dx_{j_1}}{x_{j_1}}\wedge\cdots\wedge
\frac{dx_{j_i}}{x_{j_i}} $$
with boundary map $D_{t^q}:\Omega^{i}\rightarrow \Omega^{i+1}$ defined by
$$D_{t^q}(\zeta \frac{dx_{j_1}}{x_{j_1}}\wedge\cdots\wedge
\frac{dx_{j_i}}{x_{j_i}}):=( \sum_{l=1}^nD_{l,t^q}(\zeta)\frac{dx_l}{x_l})\wedge \frac{dx_{j_1}}{x_{j_1}}\wedge\cdots\wedge
\frac{dx_{j_i}}{x_{j_i}}.$$
Denote by $H^i(\mathcal{K}_q(b,b'),D_{t^q})$ the associated cohomology spaces of the Koszul complex $\Omega^{\bullet}(\mathcal{K}_q(b,b'),D_{t^q})$.
Define
$$\mathcal{W}_q(b,b';\rho):=(\Omega[[t]]+\Omega[[t]]x_1+\cdots+\Omega[[t]]x_1\cdots x_n)\cap \mathcal{K}_q(b,b';\rho)$$
and
$$\mathcal{W}_q(b,b'):=\cup_{\rho\in \mathbb{R}} \mathcal{W}_q(b,b';\rho).$$
 It has been proved in \cite[Theorem 3.4]{HS172} that $H^i=0$ for every $i\neq n$, and
 $$\mathcal{K}_q(b,b)=\mathcal{W}_q(b,b)\oplus \sum_{l=1}^n D_{l,t^q}(\mathcal{K}_q(b,b)).$$

Define $\psi_{x}$ acting on the power series ring involving $t$ and $x$ by
$$\psi_{x}\big(\sum a(r,v)t^rx^{v}\big)=\sum a(r, pv)t^r x^{v}.$$
Define
$$\alpha_1(t):=\frac{1}{K(t^p,x)}\circ\psi_x\circ K(t,x)=\psi_{x}\circ F(t,x).$$
As shown in \cite{HS172}, we have $\alpha_1(t):\mathcal{K}(b,b)\rightarrow \mathcal{K}_p(b,b)$ where $\mathcal{K}(b,b)$ stands for $\mathcal{K}_{1}(b,b)$. Then we define
$$\alpha_a:=\alpha_1(t^{p^{a-1}})\cdots \alpha_1(t^{p})\alpha_1(t).$$
Since $q D_{l,t^q}\circ \alpha_a=\alpha_a\circ D_{l,t}$ for each $1\le l\le n$, we can define a chain map ${\rm Frob}^{\bullet}(\alpha_a): \Omega^{\bullet}(\mathcal{K}(b,b),D_t)\rightarrow \Omega^{\bullet}(\mathcal{K}_q(b,b),D_{t^q})$  by
$$ {\rm Frob}^{i}(\alpha_a):=\sum_{1\le j_1<\cdots<j_i\le n}q^{n-i}\alpha_a \frac{dx_{j_1}}{x_{j_1}}\wedge\cdots\wedge
\frac{dx_{j_i}}{x_{j_i}}.$$
Note that $\Omega^{\bullet}(\mathcal{K}_q(b,b),D_{t^q})$ is acyclic except for $H^n$. Then $\alpha_a$ induces a map  from $H^n(\mathcal{K}(b,b),D_{t})\rightarrow H^n(\mathcal{K}_q(b,b),D_{t^q})$, which is denoted by $\bar{\alpha}_a$. %When $t$ is realized by $\bar{t}\in\mathbb{F}_{q}^{*}$, $\bar{\alpha}_{a}(t)$ is an endomorphism of $H^n(\mathcal{K}(b,b),D_{t})\o$.

For $i=0,1,\ldots,n$, let $e_i:=\gamma^ix_1x_2\cdots x_i$. Haessig and Sperber proved in  \cite[Theorem 3.5]{HS172} that $\{e_i\}_{i=0,1,...,n}$ is a basis for the free $\mathcal{L}_q(b)$\text{-}module $H^n(\mathcal{K}_{q}(b,b),D_{t^q})$ and established $p$-adic estimates for the Frobenius  operator. Furthermore, Adolphson and Sperber provided a more precise description of the Frobenius  operator in \cite[Theorem 1.5]{AS2}.
Combining these, we obtain the following conclusion.
\begin{thm}\label{thm2.1}
For $\frac{p}{p-1}\geq b>\frac{1}{p-1}$, let $\{e_i\}_{i=0,1,...,n}$ be a basis of $H^n(\mathcal{K}(b,b),D_{t})$. If we write
$$\bar{\alpha}_1(t)(e_i)=\sum_{j=0}^{n}A(j,i)e_j,$$
then $A(j,i)\in \mathcal{L}(b/p;(b-\frac{1}{p-1})j+(\frac{1}{p-1}-\frac{b}{p})i)$ and $A(0,0)\equiv 1\mod (\gamma,t)$.
\end{thm}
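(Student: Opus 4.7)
The plan is to synthesize the two cited results in the natural way: \cite[Theorem 3.5]{HS172} supplies the basis $\{e_i\}$ together with a Frobenius estimate, while \cite[Theorem 1.5]{AS2} refines the estimate to the precise growth space claimed, and the normalization $A(0,0)\equiv 1\pmod{(\gamma,t)}$ is checked by direct computation. Since the basis structure makes $\bar{\alpha}_1(t)(e_i)=\sum_j A(j,i)e_j$ well-defined with $A(j,i)\in\mathcal{L}(b/p)$ for some shift, the only task is to pin down that shift and to verify the normalization at $(j,i)=(0,0)$.

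For the growth estimate, I would start from $\alpha_1(t)(e_i)=\gamma^i\,\psi_x(F(t,x)\,x_1\cdots x_i)$ and use $\mathrm{ord}_p(\theta_j)\ge j/(p-1)$ to locate $F(t,x)\,x_1\cdots x_i$ inside $\mathcal{K}(b,b;\rho_0)$ for an explicit $\rho_0$ depending on $i$. Applying $\psi_x$ transports the result into $\mathcal{K}_p(b,b)$ with the $(n+1)r/q$-weight in the definition of $\mathcal{L}_q(b;\rho)$ now divided by $p$, accounting for the outer shell $\mathcal{L}(b/p)$. The decomposition $\mathcal{K}_p(b,b)=\mathcal{W}_p(b,b)\oplus\sum_l D_{l,t^p}(\mathcal{K}_p(b,b))$ from \cite[Theorem 3.4]{HS172} projects this onto $\mathcal{W}_p(b,b)=\bigoplus_{j=0}^n\Omega[[t]]\cdot x_1\cdots x_j$, and expressing the projection in the basis $e_j=\gamma^j x_1\cdots x_j$ produces the coefficients $A(j,i)$. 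Following the weight analysis of \cite[Theorem 1.5]{AS2}, the $\gamma^j$ prefactor of $e_j$ together with the $w(\mathbf{v})$-tracking through the reduction by $D_{l,t^p}$ contributes the shift $(b-\tfrac{1}{p-1})j$, while the $\gamma^i$ prefactor of $e_i$ together with the $p$-scaling induced by $\psi_x$ contributes $(\tfrac{1}{p-1}-\tfrac{b}{p})i$; combining yields the claimed $\rho$.

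For $A(0,0)\equiv 1\pmod{(\gamma,t)}$, observe that $\theta(T)=1+\gamma T+\cdots$, and more generally every $\theta_j$ with $j\ge 1$ is divisible by $\gamma$ (from $\mathrm{ord}_p(\theta_j)\ge j/(p-1)$), so $F(t,x)\equiv 1\pmod{\gamma}$. Hence $\alpha_1(t)(e_0)=\psi_x(F(t,x))\equiv\psi_x(1)=1\pmod{\gamma}$, and since $e_j\equiv 0\pmod{\gamma}$ for $j\ge 1$, reading off the $e_0$-component forces $A(0,0)\equiv 1\pmod{\gamma}$, hence also modulo $(\gamma,t)$.

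The main obstacle is the precise weight accounting in the cohomological reduction that separates the $j$- and $i$-contributions to $\rho$; this is the technical core of \cite[Theorem 1.5]{AS2} and requires an inductive elimination using the $D_{l,t^p}$ operators while tracking the $\gamma$-weights introduced at each step.
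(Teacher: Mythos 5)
Your proposal follows essentially the same route as the paper: the paper's entire proof of Theorem~\ref{thm2.1} is the citation of \cite[Theorem 3.5]{HS172} for the basis and \cite[Theorem 1.5]{AS2} for the refined Frobenius estimate, so your sketch of where the two terms of the shift $\rho$ arise and your direct verification of the normalization $A(0,0)\equiv 1$ are consistent with the paper's (uncited) intent. One step deserves explicit justification: ``reading off the $e_0$-component'' means extracting $A(0,0)$ via the cohomological reduction, which requires that the projection of $\mathcal{K}_p(b,b)$ onto $\mathcal{W}_p(b,b)$ along $\sum_l D_{l,t^p}(\mathcal{K}_p(b,b))$ (the decomposition from \cite[Theorem 3.4]{HS172}) commutes with multiplication by $\gamma$; because that decomposition is $\Omega$-linear, writing $\psi_x(F)-e_0=\gamma\xi$ with $\xi\in\mathcal{K}_p(b,b;-1/(p-1))$ shows the $\mathcal{W}_p$-component of $\psi_x(F)-e_0$ equals $\gamma$ times that of $\xi$, so $A(0,0)-1\in\gamma\Omega[[t]]$ and a fortiori $A(0,0)\equiv 1\pmod{(\gamma,t)}$.
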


\section{infinite relative cohomology}
%For positive integer $k$, we define the space $$\mathcal{O}_{p^k}:=\{ \zeta:=\sum_{r\ge 0} a(r)\gamma^{r(n+1)/p^k}t^r:a(r)\in \Omega, a(r)\rightarrow 0\ {\rm as}\  r\rightarrow \infty\}.$$
%In particular, we write $\mathcal{O}$ when $k=0$.
In this section, we construct an infinite symmetric power space and define a $p$-adic relative cohomology on it, which will be used to express the infinite $\kappa$-symmetric power $L$-function of the hyper-Kloosterman sums.
The main objective is to provide an explicit description of this cohomology.

%Recall that $\{e_i\}_{i=0,1,...,n}$ consists a basis of $H^n(\mathcal{K}(p/(p-1),p/(p-1)),D_{t})$ where $e_0=1$. 
For $\underline{i}=(i_1,\cdots,i_n)\in\mathbb{Z}_{\ge 0}^n$, let $|\underline{i}|:=i_1+\cdots+i_n$.
For $\kappa \in \mathbb{Z}_p\setminus \mathbb{Z}_{\ge 0}$, we define $\kappa^{\underline{i}}:=\kappa (\kappa-1)\cdots (\kappa-|\underline{i}|+1)$
and
$$e_{\kappa}^{\underline{i}}:=(e_1^{i_1}\cdots e_n^{i_n})_{\kappa}=\kappa^{\underline{i}}e_1^{i_1}\cdots e_n^{i_n}.$$
Define the weight function of $e_{\kappa}^{\underline{i}}$ by $w(\underline{i}):=1\cdot i_1+\cdots+n\cdot i_n$.
%For any pair $(r,\underline{i})\in\mathbb{Z}_{\geq 0}\times\mathbb{Z}_{\geq 0}^{n}$, define the total weight function as
%\begin{equation*}
%  W_{s}(r,\underline{i}):=\frac{(n+1)r}{p^{s}}+\omega(\underline{i}).
%\end{equation*}

Let $\frac{1}{p-1}<b\le \frac{p}{p-1}$ and $c=b-\frac{1}{p-1}$.
For any $\rho\in \mathbb{R}$, we define the spaces
$$\mathcal{S}(b,c;\rho)=\big\{\sum_{\substack{r\geq 0 \\ \underline{i}\in\mathbb{Z}^n_{\geq0}}}a(r,\underline{i})t^{r}e_{\kappa}^{\underline{i}}|a(r,\underline{i})\in\Omega,\ {\rm ord}_p a(r,\underline{i})\ge b(n+1)r+cw(\underline{i})+\rho\}$$
and
$$\mathcal{S}(b,c)=\bigcup_{\rho\in \mathbb{R}}\mathcal{S}(b,c;\rho).$$
For $\zeta\in \mathcal{S}(b,c;\rho)$, we let
$${\rm ord}_p \zeta=\inf_{\substack{r\geq 0 \\ \underline{i}\in\mathbb{Z}^n_{\geq0}}} \{{\rm ord}_p a(r,\underline{i})\}$$
and 
for $\xi\in \mathcal{S}(b,c)$, let
$${\rm ord}_p \xi=\inf_{\substack{r\geq 0 \\ \underline{i}\in\mathbb{Z}^n_{\geq0}}} \{{\rm ord}_p a(r,\underline{i})-(b(n+1)r+cw(\underline{i}))\}.$$

Recall that $f(t,x)=x_1+\cdots+x_n+t/(x_1\cdots x_n)$ and $H(t,x):=\sum_{i=0}^{\infty}\gamma_if(t^{p^i},x^{p^i})$. Define $\partial:=t\frac{d}{dt}+t\frac{d}{dt}H(t,x)$. Since $\partial$ commutes with each $D_{l,t}$, it induces a connection map $\partial$ on $H^n(\mathcal{K}(b,b),D_{t})$.
We linearly extend $\partial$ to $\mathcal{S}(b,c)$ as follows: for each monomial $t^{r}e_{\kappa}^{\underline{i}}\in \mathcal{S}(b,c;\rho)$, define
\begin{equation*}
\begin{aligned}
\partial(t^{r}e_{\kappa}^{\underline{i}}):=rt^{r}e_{\kappa}^{\underline{i}}+\kappa^{\underline{i}}(\kappa-|\underline{i}|) t^r\partial(e_0)e^{\underline{i}}+\sum_{j=1}^n\kappa^{\underline{i}}i_j t^{r}e_1^{i_1}\cdots e_{j}^{i_j-1}\cdots e_n^{i_n} \partial(e_{j}).
\end{aligned}
\end{equation*}

Let $G$ be an operator on $H^n(\mathcal{K}(b,b),D_{t})$ such that $G(e_i)=e_{i+1}$ for $i=0,\ldots,n-1$ and
 $G(e_n)=\gamma^{n+1}t$, and $\partial^{(1)}:=t\frac{d}{dt}+G$.
Define the map $\nabla_{G}$ acting on $\mathcal{S}(b,c)$
by
\begin{equation*}
\nabla_{G}(t^re_{\kappa}^{\underline{i}}):=\kappa^{\underline{i}}(\kappa-|\underline{i}|) t^rG(e_0)e^{\underline{i}}+\sum_{j=1}^n\kappa^{\underline{i}}i_jt^r e_1^{i_1}\cdots e_{j}^{i_j-1}\cdots e_n^{i_n} G(e_{j}).
\end{equation*}
Similarly, we define $\partial^{(1)}$ acting on $\mathcal{S}(b,c)$ by $\partial^{(1)}:=t\frac{d}{dt}+\nabla_{G}$.

Define the complex $\Omega^{\bullet}(\mathcal{S}(b,c),\partial)$ by
$$\Omega^{0}(\mathcal{S}(b,c),\partial):=\mathcal{S}(b,c)\ {\rm and}\ \Omega^{1}(\mathcal{S}(b,c),\partial):=\mathcal{S}(b,c)\frac{dt}{t}$$
with boundary map $\partial:\Omega^0\rightarrow \Omega^1$ defined by $\eta\mapsto \partial{\eta}\frac{dt}{t}$.
Denote its cohomology by $H^i(\mathcal{S}(b,c),\partial)$
for $i=0,1$.
Then $H^0(\mathcal{S}(b,c),\partial)=\ker \partial$ and $H^1(\mathcal{S}(b,c),\partial)=\mathcal{S}(b,c)/ \partial\mathcal{S}(b,c)$.
Similarly, we define the complex
 $\Omega^{\bullet}(\mathcal{S}(b,c),\partial^{(1)})$ and its cohomology $H^i(\mathcal{S}(b,c),\partial^{(1)})$
for $i=0,1$.
In the following, we will give an explicit description of $H^0(\mathcal{S}(b,c),\partial)$ and $H^1(\mathcal{S}(b,c), \partial)$ for $\kappa\in \mathbb{Z}_p\setminus \mathbb{Z}_{\ge 0}$ by firstly studying $H^0(\mathcal{S}(b,c), \partial^{(1)})$ and $H^1(\mathcal{S}(b,c), \partial^{(1)})$.  
%Our routine is to first study the easier cohomology on $\partial^{(1)}$, then pass to the more complicated one of $\partial$.

%Then for $\zeta\in \mathcal{R}_{\infty,\kappa}$, we have
%$$\zeta=\sum_{\substack{r\ge 0\\ \underline{i}=(0,i_2,\cdots,i_n)\in \mathbb{Z}_{\ge 0}^{n}}} a(r;\underline{i})\gamma^{(n+1)r}t^re^{\underline{i}}_{\kappa}$$
%with $ {\rm ord}_p a(r;\underline{i}) \rightarrow 0$ as $W(r;\underline{i})\rightarrow \infty$.

\subsection{$H^0(\mathcal{S}(b,c), \partial^{(1)})$ and $H^1(\mathcal{S}(b,c), \partial^{(1)})$ }
\
\newline
\indent
\begin{lem}\label{lem1}
Let $\kappa \in \mathbb{Z}_p\setminus \mathbb{Z}_{\ge 0}$. We have $H^0(\mathcal{S}(b,c), \partial^{(1)})=0$.
\end{lem}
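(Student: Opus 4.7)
I would write $\xi = \sum_{r\ge 0,\underline{i}} a(r,\underline{i})\, t^r e_\kappa^{\underline{i}}$ in $\ker\partial^{(1)}$ and unpack the action of $\partial^{(1)}$ on each basis element: a diagonal contribution $r\cdot t^r e_\kappa^{\underline{i}}$ together with three types of off-diagonal terms corresponding to $G(e_0)$, $G(e_j)$ for $1\le j\le n-1$, and $G(e_n)$ in $\nabla_G$. Introducing the combined weight $\tilde w(r,\underline{i}) := (n+1)r + w(\underline{i})$, a direct check shows that the diagonal piece preserves $\tilde w$ while every off-diagonal piece raises $\tilde w$ by exactly one. Writing $\partial^{(1)} = D_0 + D_+$ accordingly and decomposing $\xi = \sum_k \xi^{[k]}$ by $\tilde w$, the equation $\partial^{(1)}\xi = 0$ becomes the graded system $D_0 \xi^{[k]} + D_+\xi^{[k-1]} = 0$ for every $k\ge 1$.

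The main step is a strong induction on $k$ showing $\xi^{[k]} = 0$. Assuming $\xi^{[j]} = 0$ for $j<k$, the level-$k$ equation collapses to $D_0 \xi^{[k]} = 0$, which forces $a(r,\underline{i}) = 0$ whenever $r \ge 1$ and $\tilde w(r,\underline{i}) = k$. Setting $\eta_k := \xi^{[k]}|_{r=0}$ and isolating the $t^0$-component of the level-$(k{+}1)$ equation then yields $A\eta_k = 0$, where
\[
A(e_\kappa^{\underline{i}}) \;=\; e_\kappa^{\underline{i}+\epsilon_1} + \sum_{j=1}^{n-1} i_j\, e_\kappa^{\underline{i}-\epsilon_j+\epsilon_{j+1}}
\]
is the $t^0$-part of $\nabla_G$. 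The base case $k=0$ is handled by exactly the same argument applied to the level-$1$ equation. Everything thus reduces to proving $A$ is injective.

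For the injectivity of $A$, the hypothesis $\kappa \notin \mathbb{Z}_{\ge 0}$ enters through the non-vanishing of $\kappa^{\underline{i}} = \kappa(\kappa-1)\cdots(\kappa-|\underline{i}|+1)$, which makes $\{e_\kappa^{\underline{i}}\}$ a genuine basis rather than a spanning family with relations. Let $V_k := \mathrm{span}\{e_\kappa^{\underline{i}} : w(\underline{i}) = k\}$; I would parameterize its basis by $(i_2, \ldots, i_n)$ via $i_1 = k - \sum_{j\ge 2} j\, i_j$. The target $\underline{i}+\epsilon_1$ keeps the same $(i_2,\ldots,i_n)$-parameter as the source with coefficient $1$, while each shift $\underline{i}-\epsilon_j+\epsilon_{j+1}$ strictly increases this parameter in the reverse-lexicographic order (priority $i_n, i_{n-1}, \ldots, i_2$); dually, the other sources hitting a given diagonal target are all strictly smaller in this order. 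Hence the square sub-matrix of $A:V_k\to V_{k+1}$ indexed by $V_k$'s parameter set is lower-triangular with $1$'s on the diagonal, so $A|_{V_k}$ is injective; compatibility with the $w$-grading promotes this to injectivity on $\Omega[[e_1,\ldots,e_n]]$. This gives $\eta_k = 0$ and closes the induction. The main obstacle is this triangularity bookkeeping, and along the way one should note that the hypothesis is sharp: for $\kappa \in \mathbb{Z}_{\ge 0}$ the relations $e_\kappa^{\underline{i}} = 0$ when $|\underline{i}|>\kappa$ allow $e_n^\kappa \in \ker A$, accounting for the failure of the lemma in that range.
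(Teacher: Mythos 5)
Your proposal is correct and takes a genuinely different route from the paper's. The paper writes $\zeta = \sum_{\underline{i}}\zeta_{\underline{i}}e_\kappa^{\underline{i}}$ with $\zeta_{\underline{i}}\in\mathcal{L}(b)$ and runs a double induction: an outer induction on the power $m$ of $t$ dividing all the $\zeta_{\underline{i}}$, and an inner cascade on $\underline{i}$ using the shift vectors $\epsilon_j$ of (3.2) (respectively $\delta_j$ of (3.5)) together with a counting argument showing that after exactly $N = \sum_{j\ge 2}(j-1)i_j$ shifts the index is forced to the base case $(\ast,0,\ldots,0)$ or to a negative coordinate. You instead introduce the combined grading $\tilde w(r,\underline{i}) = (n+1)r + w(\underline{i})$, whose graded pieces are finite-dimensional, observe that $\partial^{(1)} = t\frac{d}{dt} + \nabla_G$ is triangular for this grading (the diagonal piece $t\frac{d}{dt}$ with eigenvalue $r$ preserves $\tilde w$, while every term of $\nabla_G$ raises it by exactly one, including the $G(e_n)$ term since $(n+1)-n=1$), and run a single strong induction on $\tilde w$: the diagonal piece kills the $r\ge 1$ terms, and the surviving $r=0$ slice $\eta_k$ is killed by injectivity of the $t^0$-part $A$ of $\nabla_G$, proved by reverse-lex triangularity (diagonal entry $1$ coming from $e_1\cdot e^{\underline{i}}$, every other shift strictly increasing). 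This replaces the paper's $N_k$-bookkeeping by a clean linear-algebra fact; the underlying coefficient recursion being exploited is of course the same one (3.3). Your remark that the hypothesis $\kappa\notin\mathbb{Z}_{\ge 0}$ enters exactly through the non-vanishing of $\kappa^{\underline{i}}$, and that the lemma fails otherwise, is also correct and worth recording. The only caveat is notational: your $\epsilon_j$ are the standard basis vectors, whereas the paper reserves $\epsilon_j$ for the composite shifts of (3.2); this causes no gap but would need a change of symbol if merged with the paper's text.
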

\begin{proof}
Let $\zeta=\sum_{\underline{i}}\zeta_{\underline{i}}e_{\kappa}^{\underline{i}}\in\mathcal{S}(b,c)$ be the kernel of $\partial^{(1)}$ with $\underline{i}=(i_1,\ldots,i_n)\in \mathbb Z_{\ge 0}^n$ and $\zeta_{\underline{i}}\in \mathcal{L}(b)$.
That is,
\begin{align*}
\sum_{\underline{i}}t\frac{d\zeta_{\underline{i}}}{dt}e_{\kappa}^{\underline{i}}&+\zeta_{\underline{i}}(e_1^{i_1+1}e_2^{i_2}\cdots e_n^{i_n})_{\kappa}
+\sum_{j=1}^{n-1}i_j\zeta_{\underline{i}}(e_1^{i_1}\cdots e_{j}^{i_j-1}e_{j+1}^{i_{j+1}+1}\cdots e_n^{i_n})_{\kappa}\\
&+(\kappa-|\underline{i}|+1)i_n\gamma^{n+1} t\zeta_{\underline{i}}(e_1^{i_1}\cdots e_{n-1}^{i_{n-1}} e_{n}^{i_n-1})_{\kappa}=0.
\end{align*}
Then the coefficient of $e_{\kappa}^{\underline{i}}$ is
\small{\begin{equation}\label{zeq1}
t\frac{d\zeta_{\underline{i}}}{dt}+\zeta_{(i_1-1,i_2,\ldots,i_n)}+
\sum_{j=1}^{n-1}(i_j+1)\zeta_{(i_1,\ldots,i_j+1,i_{j+1}-1,\ldots,i_n)}+(\kappa-|\underline{i}|+1)(i_n+1)\gamma^{n+1} t\zeta_{(i_1,\ldots,i_{n-1},i_n+1)}=0.
\end{equation}}
We proceed by induction on $m$ to show that $t^{m}|\zeta_{\underline{i}}$ for all $m\in\mathbb{Z}_{\geq 1}$.
\par\textbf{Step 1.} For any $\underline{i}\in \mathbb Z_{\geq 0}^n$, we have $t\mid\zeta_{\underline{i}}$.

To simplify the notation, let
\begin{equation} \label{vector1}
\begin{aligned}
\epsilon_{1}&=(2,-1,0,\cdots,0), \\ \epsilon_{2}&=(1,1,-1,0\ldots,0),\\ &\cdots\\
\epsilon_{n-2}&=(1,0,0,\cdots,1,-1,0),\\
\epsilon_{n-1}&=(1,0,0,\ldots,0,1,-1). \end{aligned}
\end{equation}
be vectors in $\mathbb Z^n$, and replace $i_1$ by $i_1+1$ in (\ref{zeq1}). Then $\mathrm{mod}\ t$ property of (\ref{zeq1}) gives us
\begin{equation}\label{zeq2}
\zeta_{\underline{i}}+
(i_1+2)\zeta_{\underline{i}+\epsilon_1}+\sum_{j=2}^{n-1}(i_j+1)\zeta_{\underline{i}+\epsilon_j}\equiv 0\ \mathrm{mod}\ t\ \mathrm{for}\ \mathrm{any}\ \underline{i}\in\mathbb{Z}_{\geq 0}^{n}.
\end{equation}
By letting $i_{2}=\cdots=i_{n}=0$ in (\ref{zeq2}) we obtain $t|\zeta_{(i_{1},0,\cdots,0)}$ for all $i_{1}\geq 0$.

From (\ref{zeq2}), we see that $t|\zeta_{\underline{i}+\epsilon_{j}}$ for all $1\leq j\leq n-1$ implies $t|\zeta_{\underline{i}}$. For a fixed $j$, replace $\underline{i}$ in (\ref{zeq2}) by $\underline{i}+\epsilon_{j}$. We see that $t|\zeta_{\underline{i}+\epsilon_{k}+\epsilon_{j}}$ for all $1\leq k\leq n-1$ implies $t|\zeta_{\underline{i}+\epsilon_{j}}$. It follows that $t|\zeta_{\underline{i}+\epsilon_{k}+\epsilon_{j}}$ for all $1\leq k,j\leq n-1$ implies $t|\zeta_{\underline{i}}$. Recursively using (\ref{zeq2}) we have that for a fixed $N\geq 1$, $t|\zeta_{\underline{i}+\sum_{\alpha=1}^{N}\epsilon_{j_{\alpha}}}$ for all $1\leq j_{1},\cdots,j_{N}\leq n-1$ implies $t|\zeta_{\underline{i}}$. Let $N_{k}$ be the number of $\epsilon_{k}$ appearing in $\{\epsilon_{j_{\alpha}}\}_{1\leq\alpha\leq N}$. Then $N_{1}+\cdots+N_{n-1}=N$.

Now we let $N=(n-1)i_{n}+(n-2)i_{n-1}+\cdots+2i_{3}+i_{2}$. We claim that $t|\zeta_{\underline{i}+\sum_{\alpha=1}^{N}\epsilon_{j_{\alpha}}}$ for all $1\leq j_{1},\cdots,j_{N}\leq n-1$.
If $N_{n-1}>i_{n}$, then the $n$\text{-}th component of $\underline{i}+\sum_{\alpha=1}^{N}\epsilon_{j_{\alpha}}$ is negative.
Hence $\zeta_{\underline{i}+\sum_{\alpha=1}^{N}\epsilon_{j_{\alpha}}}=0$, $t|\zeta_{\underline{i}+\sum_{\alpha=1}^{N}\epsilon_{j_{\alpha}}}$. Thus $N_{n-1}\leq i_{n}$.

Suppose $N_{n-2}>i_{n-1}+N_{n-1}$. Then the $(n-1)$\text{-}th component of $\underline{i}+\sum_{\alpha=1}^{N}\epsilon_{j_{\alpha}}$ is negative.
Hence $\zeta_{\underline{i}+\sum_{\alpha=1}^{N}\epsilon_{j_{\alpha}}}=0$.
It follows that $N_{n-2}\leq N_{n-1}+i_{n-1}$.

 We do above procedure consecutively for $N_{k}$, $1\leq k\leq n-2$. If $N_{k}>N_{k+1}+i_{k+1}$, then the $(k+1)$\text{-}th component in $\underline{i}+\sum_{\alpha=1}^{N}\epsilon_{j_{\alpha}}$ is negative.
 Hence $N_{k}\leq N_{k+1}+i_{k+1}$.

By the above argument, we obtain that $N_{n-1}\leq i_{n}$, $N_{n-2}\leq N_{n-1}+i_{n-1}\leq i_{n-1}+i_{n}$, $N_{n-3}\leq N_{n-2}+i_{n-2}\leq i_{n-2}+i_{n-1}+i_{n}$,$\cdots$, $N_{1}\leq i_{2}+\cdots+i_{n}$. It follows that
$$N=N_{1}+\cdots+N_{n-1}\leq (i_{2}+\cdots+i_{n})+(i_{3}+\cdots+i_{n})+\cdots+(i_{n-1}+i_{n})+i_{n}=N.$$
Hence all above inequalities are forced to be equalities. Note that $\underline{i}+\sum_{\alpha=1}^{N}\epsilon_{j_{\alpha}}=(i_{1}+2i_{2}+\cdots+ni_{n},0,\cdots,0)$.
We have that $t|\zeta_{\underline{i}+\sum_{\alpha=1}^{N}\epsilon_{j_{\alpha}}}$ . Therefore  the claim is true, which implies $t|\zeta_{\underline{i}}$ for any $\underline{i}\in \mathbb{Z}_{\ge 0}^n$.
\par\textbf{Step 2.} If $t^m\mid \zeta_{\underline{i}}$ for any $\underline{i}\in \mathbb{Z}_{\ge 0}^n$, then $t^{m+1}\mid \zeta_{\underline{i}}$ for any $\underline{i}\in \mathbb{Z}_{\ge 0}^n$.

 To simplify the notation, let
\begin{equation} \label{vector2}
\begin{aligned}
\delta_1&=(-1,0,\ldots,0),\\ \delta_2&=(1,-1,0,\ldots,0),\\ &\cdots \\ \delta_n&=(0,\ldots,0,1,-1), \\
\delta_{n+1}&=(0,\ldots,0,0,1).
\end{aligned}
\end{equation}
be vectors in $\mathbb{Z}^{n}$. By the assumption that $t^m\mid \zeta_{\underline{i}}$ for any $\underline{i}\in \mathbb{Z}_{\ge 0}^n$, the $\mathrm{mod}\ t^{m+1}$ property of (\ref{zeq1}) can be written as
\begin{equation}\label{zeq3}
t\frac{d\zeta_{\underline{i}}}{dt}+\zeta_{\underline{i}+\delta_{1}}+
\sum_{j=1}^{n-1}(i_j+1)\zeta_{\underline{i}+\delta
_{j+1}}\equiv 0\ \mathrm{mod}\ t^{m+1}\ \mathrm{for}\ \mathrm{any}\ \underline{i}\in\mathbb{Z}_{\geq 0}^{n}.
\end{equation}
Let $i_{1}=\cdots=i_{n}=0$ in (\ref{zeq3}), we obtain $t^{m+1}|\zeta_{(0,\cdots,0)}.$

Similarly to Step 1, for a fixed integer $N\geq 1$, we have that $t^{m+1}|\zeta_{\underline{i}+\sum_{\alpha=1}^{N}\delta_{j_{\alpha}}}$ for all $1\leq j_{1},\cdots,j_{N}\leq n$ implies $t^{m+1}|\zeta_{\underline{i}}$. Denote $N_{k}$ be the number of $\delta_{k}$ that appears in $\{\delta_{j_{\alpha}}\}_{1\leq\alpha\leq N}$. Then $N_{1}+\cdots+N_{n}=N$.

Now let $N=ni_{n}+(n-1)i_{n-1}+\cdots+2i_{2}+i_{1}$. We claim that $t^{m+1}|\zeta_{\underline{i}+\sum_{\alpha=1}^{N}\epsilon_{j_{\alpha}}}$ for all $1\leq j_{1},\cdots,j_{N}\leq n$.

 Similarly to Step 1, we conclude that $N_{n}\leq i_{n}$, $N_{n-1}\leq N_{n}+i_{n-1}\leq i_{n-1}+i_{n}$, $N_{n-2}\leq N_{n-1}+i_{n-2}\leq i_{n-2}+i_{n-1}+i_{n}$,$\cdots$, $N_{1}\leq i_{1}+\cdots+i_{n}$.Hence
$$N=N_{1}+\cdots+N_{n}\leq (i_{1}+\cdots+i_{n})+(i_{2}+\cdots+i_{n})+\cdots+(i_{n-1}+i_{n})+i_{n}=N.$$
Then all above inequalities are forced to be equalities. Since $\underline{i}+\sum_{\alpha=1}^{N}\delta_{j_{\alpha}}=(0,0,\cdots,0)$, we have that $t^{m+1}|\zeta_{\underline{i}+\sum_{\alpha=1}^{N}\delta_{j_{\alpha}}}$. Thus the claim holds, which implies $t^{m+1}|\zeta_{\underline{i}}$ for all $\underline{i}\in \mathbb{Z}_{\ge 0}^n$.

 Combining with Step 1 and Step 2, we finish the induction on $m$ and  for any $\underline{i}\in \mathbb Z^n_{\geq 0}$
we obtain that $t^m\mid \zeta_{\underline{i}}$ for any $m\in \mathbb Z_{\geq 1}$.
Which means $\zeta_{\underline{i}}=0$ for any $\underline{i}\in \mathbb Z^n_{\geq 0}$. This finishes the proof of Lemma \ref{lem1}.
\end{proof}

For $\rho\in \mathbb{R}$, let $\mathcal{R}(b,c;\rho)$ be a subspace of $\mathcal{S}(b,c;\rho)$ defined by
$$\mathcal{R}(b,c;\rho)=\big\{\sum_{\substack{r\geq 0 \\ \underline{i}\in\mathbb{Z}^n_{\geq 0}\ \mathrm{with}\ i_{1}=0}}a(r,\underline{i})t^{r}e_{\kappa}^{\underline{i}}|a(r,\underline{i})\in\Omega,\ {\rm ord}_p a(r,\underline{i})\ge b(n+1)r+cw(\underline{i})+\rho\}$$
and
$$\mathcal{R}(b,c):=\bigcup_{\rho\in \mathbb{R}}\mathcal{R}(b,c;\rho).$$
The following two lemmas gives the explicit construction of $H^{1}(\mathcal{S}(b,c),\partial^{(1)})$.
\begin{lem} \label{lem5}
For $\kappa \in \mathbb{Z}_p\setminus \mathbb{Z}_{\ge 0}$, we have that $\partial^{(1)}\mathcal{S}(b,c) \cap \mathcal{R}(b,c)=\{0\}.$
\end{lem}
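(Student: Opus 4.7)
My plan is to prove the stronger claim that any $\xi\in\mathcal{S}(b,c)$ with $\partial^{(1)}\xi\in\mathcal{R}(b,c)$ must in fact equal zero, which immediately forces $\partial^{(1)}\xi=0$. First I will write $\xi=\sum_{r,\underline i}\xi_{\underline i}(r)t^{r}e_{\kappa}^{\underline i}$ and expand $\partial^{(1)}\xi$ componentwise by the same calculation that produced (\ref{zeq1}); the hypothesis $\partial^{(1)}\xi\in\mathcal{R}(b,c)$ then translates into the relation
\begin{equation*}
r\xi_{\underline i}(r)+\xi_{(i_1-1,i_2,\ldots,i_n)}(r)+\sum_{j=1}^{n-1}(i_j+1)\xi_{(i_1,\ldots,i_j+1,i_{j+1}-1,\ldots,i_n)}(r)+(\kappa-|\underline i|+1)(i_n+1)\gamma^{n+1}\xi_{(i_1,\ldots,i_n+1)}(r-1)=0
\end{equation*}
for every $r\ge 0$ and every $\underline i$ with $i_1\ge 1$. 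I will then prove $\xi_{\underline i}(r)=0$ for all $\underline i$ and all $r$ by a double induction: outer on the $t$-order $r$, and inner on the lexicographic order of $(i_n,i_{n-1},\ldots,i_2)$.

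For the outer base $r=0$, both the $r\xi(r)$ term and the $\xi(r-1)$ term drop out. Shifting $i_1\mapsto i_1+1$ in the constraint rewrites it as an expression of $\xi_{\underline i}(0)$ solely in terms of $\xi_{\underline m}(0)$ with $(m_n,\ldots,m_2)$ strictly lex-smaller than $(i_n,\ldots,i_2)$. At the inner base $\underline i=(i_1,0,\ldots,0)$, every right-hand summand has some entry equal to $-1$ and therefore vanishes by the convention $\xi_{\underline m}=0$ for $\underline m\notin\mathbb{Z}^n_{\ge 0}$, so a straightforward induction on lex order delivers $\xi_{\underline i}(0)=0$ for every $\underline i$.

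For the outer inductive step $r\ge 1$, I will assume $\xi_{\underline i}(r')=0$ for all $r'<r$ (which kills the $\xi(r-1)$ term) and rearrange the constraint via $i_1\mapsto i_1+1$ into
\begin{equation*}
\xi_{\underline i}(r)=-r\,\xi_{(i_1+1,i_2,\ldots,i_n)}(r)-(i_1+2)\xi_{(i_1+2,i_2-1,i_3,\ldots,i_n)}(r)-\sum_{j=2}^{n-1}(i_j+1)\xi_{(i_1+1,\ldots,i_j+1,i_{j+1}-1,\ldots,i_n)}(r).
\end{equation*}
All the right-hand terms other than the first have strictly lex-smaller $(m_n,\ldots,m_2)$ and so vanish by the inner induction (or by the $-1$ convention at the inner base). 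The hard part will be the self-referential term $-r\,\xi_{(i_1+1,i_2,\ldots,i_n)}(r)$, which preserves $(i_n,\ldots,i_2)$ and only shifts $i_1$. I plan to resolve it by iterating the same identity $k$ times on that one term, yielding $\xi_{\underline i}(r)=(-r)^{k}\xi_{(i_1+k,i_2,\ldots,i_n)}(r)$ modulo terms of strictly smaller lex order (already zero), and then invoking the growth estimate built into $\xi\in\mathcal{S}(b,c)$: $\mathrm{ord}_p\xi_{(i_1+k,i_2,\ldots,i_n)}(r)\ge b(n+1)r+c(i_1+k+2i_2+\cdots+ni_n)+\rho$ tends to $\infty$ as $k\to\infty$ while $|(-r)^{k}|_p\le 1$, so the tail vanishes in the $p$-adic topology. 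Thus $\xi_{\underline i}(r)=0$, and completing both inductions gives $\xi\equiv 0$ and hence $\partial^{(1)}\xi=0$.
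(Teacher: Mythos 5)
Your proposal is correct, and it tracks the paper's argument in its essentials: both prove the stronger statement that $\partial^{(1)}\xi\in\mathcal{R}(b,c)$ forces $\xi=0$, by extracting the coefficient recursion from the $i_1\ge 1$ components, running a double induction (outer on the power of $t$, inner on the indices $(i_2,\ldots,i_n)$), and disposing of the self-referential $i_1$-direction via the Banach-space growth estimate built into $\mathcal{S}(b,c;\rho)$. Your bookkeeping differs from the paper in two places. You order the inner induction lexicographically on $(i_n,\ldots,i_2)$ whereas the paper inducts on the weighted degree $I(\underline{i})=w(\underline{i})-|\underline{i}|$; both strictly decrease by one under each recursion step and are well-founded, so either works. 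More notably, you never touch the constraint coming from the $i_1=0$ components (the paper's (\ref{eq4}), involving the coefficients $a(r,\underline{i})$ of $\eta$): you iterate the recursion \emph{upward}, obtaining $\xi_{\underline{i}}(r)=(-r)^k\xi_{(i_1+k,i_2,\ldots,i_n)}(r)$ and letting $k\to\infty$, using $|(-r)^k|_p\le1$ against $\mathrm{ord}_p\xi_{(i_1+k,\ldots)}(r)\ge b(n+1)r+c(i_1+k+\cdots)+\rho\to\infty$. The paper iterates \emph{downward} instead, landing on the explicit formula $(-1)^{i_1}a(m;\underline{0})/m^{i_1+1}$ for the $t^m$-coefficient of $\zeta_{(i_1,0,\ldots,0)}$ via (\ref{eq4}), and then invokes the same Banach estimate to force $a(m;\underline{0})=0$. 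Your variant is slightly more economical in that it sidesteps (\ref{eq4}) entirely, but the underlying mechanism is identical, and the argument is sound.
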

\begin{proof}
Let $\eta\in \partial^{(1)}\mathcal{S}(b,c) \cap \mathcal{R}(b,c)$. Then we write
 $$\eta=\sum_{\substack{r\ge 0\\ \underline{i}=(0,i_2,\cdots,i_n)}} a(r,\underline{i})t^{r}e_{\kappa}^{\underline{i}}.$$
 Let $\zeta=\sum_{\underline{i}\in \mathbb{Z}_{\ge 0}^n}\zeta_{\underline{i}}e^{\underline{i}}_{\kappa}\in \mathcal{S}(b,c;\rho)$ such that
 $\partial^{(1)}(\zeta)=\eta.$

By \hyperref[lem1]{Lemma 3.1} the coefficient of $e^{\underline{i}}_{\kappa}$ in $\partial^{(1)}(\zeta)$ is
\begin{equation*}
\begin{aligned}
t\frac{d\zeta_{\underline{i}}}{dt}+\zeta_{(i_1-1,i_2,\ldots,i_n)}+
\sum_{j=1}^{n-1}(i_j+1)\zeta_{(i_1,\ldots,i_j+1,i_{j+1}-1,\ldots,i_n)}+(\kappa-|\underline{i}|)(i_n+1)\gamma^{n+1} t\zeta_{(i_1,\ldots,i_{n-1},i_n+1)}.
\end{aligned}
\end{equation*}
Comparing the coefficients of $e^{\underline{i}}_{\kappa}$ in $\partial^{(1)}(\zeta)$ and $\eta$, we have that
\begin{align} \label{eq4}
 t\frac{d\zeta_{\underline{i}}}{dt} &+\sum_{j=1}^{n-1}(i_{j}+1)\zeta_{(i_1,\ldots,i_j+1,i_{j+1}-1,\ldots,i_n)}+(\kappa-|\underline{i}|)(i_n+1)\gamma^{n+1}t\zeta_{(i_1,\cdots,i_{n-1},i_n+1)}\\\nonumber
&=\sum_{r\ge 0} a(r,\underline{i})t^r\ \mathrm{when}\ i_{1}=0,
\end{align}
and
\begin{align}\label{eq5}
 t \frac{d\zeta_{\underline{i}}}{dt}&+\zeta_{(i_1-1,i_2,\cdots,i_n)}+\sum_{j=1}^{n-1}(i_j+1)\zeta_{(i_1,\ldots,i_j+1,i_{j+1}-1,\ldots,i_n)} \\\nonumber
 &+
(\kappa-|\underline{i}|)(i_n+1)\gamma^{n+1}t\zeta_{(i_1,\cdots,i_{n-1},i_n+1)}=0\ \mathrm{when}\ i_{1}\geq 1.
\end{align}

We then proceed by induction on $m$ to show that $t^{m}|\zeta_{\underline{i}}$ for all $\underline{i}\in\mathbb{Z}_{\geq 0}^{n}$.
\par\textbf{Step 1.} We show that $t|\zeta_{\underline{i}}$ for all $\underline{i}\in\mathbb{Z}_{\geq 0}^{n}$.

Firstly, it follows from (\ref{eq4}) that
$$\sum_{j=1}^{n-1}(i_j+1)\zeta_{(i_1,\ldots,i_j+1,i_{j+1}-1,\ldots,i_n)}\equiv a(0,\underline{i}) \mod t .$$
Let $i_2=\dots=i_n=0$. Then we have that $a(0,\underline{0})\equiv 0\ \mathrm{mod}\ t$.
 Hence $a(0;\underline{0})=0$.
 By (\ref{eq5}), we have that
\begin{equation*}
\zeta_{(i_1-1,i_2,\cdots,i_n)}+\sum_{j=1}^{n-1}(i_j+1)\zeta_{(i_1,\ldots,i_j+1,i_{j+1}-1,\ldots,i_n)}\equiv 0 \mod t.
\end{equation*}
It then follows that
$\zeta_{(i_1-1,0,\cdots,0)}\equiv 0 \mod t  $
for all $i_1\ge 1$.

Use (\ref{vector1}) to simplify the notation and replace $i_{1}$ by $i_{1}+1$ in (\ref{eq5}).
Then for all $\underline{i}\in\mathbb{Z}_{\geq 0}^{n}$, from (\ref{eq5}) we have that
$$\zeta_{\underline{i}}+
(i_1+2)\zeta_{\underline{i}+\epsilon_1}+\sum_{j=2}^{n-1}(i_j+1)\zeta_{\underline{i}+\epsilon_j}\equiv 0 \mod t.$$
By the same argument as Step 1 of \hyperref[lem1]{Lemma 3.1}, we have $t|\zeta_{\underline{i}}$ for all $\underline{i}\in\mathbb{Z}_{\geq 0}^{n}$.

\textbf{Step 2.} Suppose for each $1\leq k\leq m$, $t^k\mid \zeta_{\underline{i}}$ for all $\underline{i}\in \mathbb{Z}_{\ge 0}^n$. We show that
$t^{m+1}\mid \zeta_{\underline{i}}$ for all $\underline{i}\in \mathbb{Z}_{\ge 0}^n$.

Let $i_{1}=\cdots=i_{n}=0$ in (\ref{eq4}), we have that
$$ t \frac{d\zeta_{\underline{0}}}{dt}+
(\kappa-2)\gamma^{n+1}\zeta_{(0,\cdots,0,1)} t=\sum_{r\ge m}a(r,(0,\cdots,0))t^r.$$
By the assumption $t^m\mid \zeta_{\underline{i}}$ for all $\underline{i}\in \mathbb{Z}_{\ge 0}^n$, the coefficient of $t^{m}$ in $\zeta_{\underline{0}}$ is $a(m;\underline{0})/m$.
Let $i_1\ge 1$ and $i_2=\cdots=i_n=0$ in (\ref{eq5}). We obtain that
$$t \frac{d\zeta_{(i_1,0,\cdots,0)}}{dt}+\zeta_{(i_1-1,0,\cdots,0)}\equiv 0 \mod t^{m+1}. $$
Recursively using the above relation, we compute that the coefficient of $t^m$ in $\zeta_{(i_1,0,\cdots,0)}$
is $(-1)^{i_1}a(m;\underline{0})/m^{i_1+1}$.
Hence $$\zeta_{(i_1,0,\cdots,0)}=(-1)^{i_1}\frac{a(m;\underline{0})}{m^{i_{1}+1}}t^m+O(t^{m+1})$$
for each $i_1\ge 0$, which means that
$a(m;\underline{0})/m^{i_1+1}\rightarrow 0 $ as $i_1\rightarrow \infty$ due to the Banach algebra structure of $\mathcal{S}(b,c;\rho)$. This is impossible unless $a(m;\underline{0})=0$. Therefore, we have $t^{m+1}|\zeta_{(i_{1},0,\cdots,0)}$ for all $i_{1}\geq 0$.

To prove the argument for any $\underline{i}\in \mathbb Z_{\geq 0}^n$. We define 
$$I(\underline{i}):=\omega(\underline{i})-|\underline{i}|=i_{2}+2i_{3}+\cdots (n-1)i_{n},$$
and proceed with an induction on $I(\underline{i})$ to show $t^{m+1}|\zeta_{\underline{i}}$ for any $I(\underline{i})\in\mathbb{Z}_{\geq 0}$. The above discussion shows that the argument is true for $I(\underline{i})=0$. Suppose $t^{m+1}|\zeta_{\underline{i}}$ for any $\underline{i}\in\mathbb{Z}_{\geq 0}^{n}$ satisfying $I(\underline{i})=l$. We will show that $t^{m+1}|\zeta_{\underline{i}}$ for any $\underline{i}$ such that $I(\underline{i})=l+1$.

 Note that the value of $I(\underline{i})$ is independent of the value of $i_{1}$. Given $(0,i_{2},\cdots,i_{n})\in\mathbb{Z}_{\geq 0}^{n}$ such that $I(0,i_{2},\cdots,i_{n})=l+1$, then $I(\underline{i})=l+1$ with $\underline{i}=(i_{1},i_{2},\cdots,i_{n})$ for any $i_{1}\geq 0$.

For $i_{1}=0$, applying (\ref{eq4}) we have that 
$$t\frac{d\zeta_{\underline{i}}}{dt}+\sum_{j=1}^{n-1}(i_{j}+1)\zeta_{\underline{i}+\delta_{j+1}}+(\kappa-|\underline{i}|)(i_{n}+1)\gamma^{n+1}t\zeta_{\underline{i}+\delta_{n+1}}=\sum_{r\geq m}a(r;\underline{i})t^{r}.$$
Notice that $I(\underline{i}+\delta_{j})=l$ for all $1\leq j\leq n$. Then the induction hypothesis on $m$ and on $I(\underline{i})$ gives 
$$t\frac{d\zeta_{\underline{i}}}{dt}\equiv a(m;\underline{i})t^{m}\ \mathrm{mod}\ t^{m+1}.$$
So the coefficient of $t^{m}$ in $\zeta_{(0,i_{2},\cdots,i_{n})}$ is $a(m;(0,i_{2},\cdots,i_{n}))/m$.

For $i_{1}\geq 1$, applying (\ref{eq5}) and the induction  we obtain
$$t\frac{d\zeta_{\underline{i}}}{dt}+\zeta_{(i_{1}-1,i_{2},\cdots,i_{n})}\equiv 0\ \mathrm{mod}\ t^{m+1}.$$
The recursive argument gives
$$\zeta_{(i_{1},i_{2},\cdots,i_{n})}=(-1)^{i_{1}}\frac{a(m;(0,i_{2},\cdots,i_{n}))}{m^{i_{1}+1}}+O(t^{m+1}).$$
Again, the Banach structure gives $a(m;(0,i_{2},\cdots,i_{n}))=0$. Hence $t^{m+1}|\zeta_{(i_{1},i_{2},\cdots,i_{n})}$ for any $i_{1}\geq 0$. Moreover, for any such $\underline{i}$ with $I(\underline{i})=l+1$. This finishes the induction on $I(\underline{i})$, and completes Step 2. 

In summary, we obtain $t^{m}|\zeta_{\underline{i}}$ for any $m\geq 1$ and any $\underline{i}\in\mathbb{Z}_{\geq 0}^{n}$. Hence $\zeta=0$ and $\eta=\partial^{(1)}(\zeta)=0$, which finishes the proof of Lemma \ref{lem5}.
\end{proof}

\begin{lem} \label{lem2}
For $\kappa \in \mathbb{Z}_p\setminus \mathbb{Z}_{\ge 0}$, we have that
$$\mathcal{S}(b,c;0)=\mathcal{R}(b,c;0)+  \nabla_{G}\mathcal{S}(b,c;c).$$
\end{lem}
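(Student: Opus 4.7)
My plan is to construct $\zeta\in\mathcal{S}(b,c;c)$ explicitly from a given $\eta\in\mathcal{S}(b,c;0)$ so that $\eta-\nabla_G\zeta$ has no terms with $i_1\ge 1$, forcing it into $\mathcal{R}(b,c;0)$. Writing $\eta=\sum_{\underline{i}}\eta_{\underline{i}}(t)e_\kappa^{\underline{i}}$ and $\zeta=\sum_{\underline{i}}\zeta_{\underline{i}}(t)e_\kappa^{\underline{i}}$, and extracting the coefficient of $e_\kappa^{\underline{i}}$ in $\nabla_G\zeta$ exactly as in the proof of Lemma~\ref{lem5}, the condition $\eta-\nabla_G\zeta\in\mathcal{R}(b,c;0)$ becomes, for each $\underline{i}$ with $i_1\ge 1$,
$$\eta_{\underline{i}}(r)=\zeta_{(i_1-1,i_2,\ldots,i_n)}(r)+\sum_{j=1}^{n-1}(i_j+1)\zeta_{(i_1,\ldots,i_j+1,i_{j+1}-1,\ldots)}(r)+(i_n+1)(\kappa-|\underline{i}|)\gamma^{n+1}\zeta_{(i_1,\ldots,i_{n-1},i_n+1)}(r-1).$$
Reindexing with $k=i_1-1\ge 0$, I rearrange this to express $\zeta_{(k,i_2,\ldots,i_n)}(r)$ in terms of $\eta_{(k+1,i_2,\ldots,i_n)}(r)$ and other $\zeta$'s whose first coordinate is $k+1$ or $k+2$.

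Iterating the recursion yields a formal series $\zeta=\sum_{m\ge 0}L^{m}v$, where $v_{(k,i_2,\ldots,i_n)}(r):=\eta_{(k+1,i_2,\ldots,i_n)}(r)$ and $L$ is the linear operator collecting the shift contributions. Each $[L^{m}v]_{(k,\underline{i})}(r)$ is a sum over length-$m$ ``paths'' in the index lattice, where at each step one applies a move of type \textbf{(a)} $k\mapsto k+2$, $i_2\mapsto i_2-1$; \textbf{(b)} $k\mapsto k+1$ with a swap between positions $j,j+1$ for some $2\le j\le n-1$; or \textbf{(c)} $k\mapsto k+1$, $i_n\mapsto i_n+1$, $r\mapsto r-1$, the last carrying a factor $\gamma^{n+1}$. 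Along any path with $m_c$ moves of type (c), direct computation gives $r^{(m)}=r-m_c$ and $w(k^{(m)}+1,I^{(m)})=w(\underline{i})+(n+1)m_c+1$, and using $c=b-\tfrac{1}{p-1}$ the $\gamma$-contribution $\tfrac{(n+1)m_c}{p-1}$ exactly cancels the loss $m_c(n+1)(b-c)=\tfrac{(n+1)m_c}{p-1}$ in the bound on $\eta$ coming from the shifted $r$ and $w$. This produces the uniform estimate
$$\mathrm{ord}_{p}[L^{m}v]_{(k,\underline{i})}(r)\ge b(n+1)r+cw(k,\underline{i})+c,$$
so that, once convergence is established, $\zeta\in\mathcal{S}(b,c;c)$ automatically.

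The remaining delicate point is convergence of $\sum_{m}L^{m}v$ at each fixed $(k,\underline{i},r)$. Since $L$ only preserves---rather than contracts---the weighted norm, the required $p$-adic decay comes from a separate estimate: after $m$ steps the first coordinate satisfies $k^{(m)}\ge k+m$, so $w(k^{(m)}+1,I^{(m)})\ge k+m+1$ and
$$\mathrm{ord}_{p}\eta_{(k^{(m)}+1,I^{(m)})}(r^{(m)})\ge c(k+m+1).$$
Combined with the ultrametric property and the bound $n^{m}$ on the number of paths of length $m$, this forces $\mathrm{ord}_{p}[L^{m}v]_{(k,\underline{i})}(r)\to\infty$ as $m\to\infty$, and the series converges coefficient-by-coefficient. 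Putting $\eta_{0}:=\eta-\nabla_G\zeta$, the construction ensures its $i_1\ge 1$ part vanishes, and combined with $\eta,\nabla_G\zeta\in\mathcal{S}(b,c;0)$ this places $\eta_{0}\in\mathcal{R}(b,c;0)$, completing the decomposition. I expect the main obstacle to be precisely this interplay of two complementary estimates---the uniform weighted bound pinning down the norm of $\zeta$ and the growing bound ensuring convergence---both of which depend essentially on the balance $c=b-\tfrac{1}{p-1}$.
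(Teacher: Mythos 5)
Your proposal is correct and is essentially the dual presentation of the paper's argument: the paper iterates the identity $e_{\kappa}^{\underline{i}}=\nabla_G(e_{\kappa}^{\underline{i}-\epsilon_0})-(\cdots)$ downward in $i_1$ to reach $\mathcal{R}(b,c)$ plus a $\nabla_G$-term, while you solve the coefficient system $\eta_{\underline{i}}=(\nabla_G\zeta)_{\underline{i}}$ upward in $i_1$ as a geometric series $\sum_m L^m v$, with identical weight bookkeeping and the same cancellation driven by $c=b-\tfrac{1}{p-1}$. (One remark: your convergence step can be made even more elementary, since $r^{(m)}=r-m_c\ge 0$ together with $i_j^{(m)}\ge 0$ forces $m\le S_0+(n+1)r$, so the series at each fixed $(k,\underline{i},r)$ is in fact a finite sum, exactly paralleling the finite $l$-sums in the paper.)
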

\begin{proof}
It is enough to show that $\mathcal{S}(b,c;0)\subseteq\mathcal{R}(b,c;0)+  \nabla_{G}\mathcal{S}(b,c;c).$
By the definition of $\nabla_{G}$, we have that
\begin{align}\label{eq8}
e_{\kappa}^{\underline{i}}=&\nabla_{G}(e_1^{i_1-1}e_2^{i_2}\cdots e_n^{i_n})_{\kappa}-\big((i_1-1)(e_1^{i_1-2}e_2^{i_2+1}\cdots e_n^{i_n})_{\kappa}+\sum_{j=2}^{n-1}i_j(e_1^{i_1-1}e_2^{i_2}\cdots e_j^{i_j-1}e_{j+1}^{i_{j+1}+1}\cdots e_n^{i_n})_{\kappa}\\\nonumber
&+(\kappa-|\underline{i}|+2)i_n\gamma^{n+1} t(e_1^{i_1-1}e_2^{i_2}\cdots e_{n-1}^{i_{n-1}}e_n^{i_n-1})_{\kappa}\big).
\end{align}

We use (\ref{vector1}) to simplify the notation, and let $\epsilon_0=(1,0,...,0)$ and $\epsilon_n=(1,0,...,1)$.
Then (\ref{eq8}) can be rewritten as
\begin{align*}
e_{\kappa}^{\underline{i}}=\nabla_{G}(e^{\underline{i}-\epsilon_0}_{\kappa})+\big((1-i_1)e_{\kappa}^{\underline{i}-\epsilon_1}+\sum_{j=2}^{n-1}(-i_j)e_{\kappa}^
{\underline{i}-\epsilon_j}+(|\underline{i}|-2-\kappa)i_n\gamma^{n+1} te_{\kappa}^{\underline{i}-\epsilon_n}\big).
\end{align*}
Then using (\ref{eq8}) recursively, we have that
\begin{align*}
e_{\kappa}^{\underline{i}}=&\nabla_{G}(\sum_{l_1,\ldots,l_n\ge 0}\zeta_{l_1,\ldots,l_n}\gamma^{(n+1)l_n}t^{l_n}e^{\underline{i}-\sum l_i\epsilon_i-\epsilon_0}_{\kappa})\\
&+\sum_{\substack{l_1,\ldots,l_n\ge 0\\ i_1=2l_1+l_2+\ldots+l_n}}\delta_{l_1,\ldots,l_n}\gamma^{(n+1)l_n}t^{l_n}
e^{\underline{i}-\sum l_i\epsilon_i}_{\kappa},
\end{align*}
where $\zeta_{l_1,\ldots,l_n}$ and $\delta_{l_1,\ldots,l_n}$ are $p$-adic integers.

Let $\zeta=\sum_{r,\underline{i}}a(r,\underline{i})t^re_{\kappa}^{\underline{i}} \in \mathcal{S}(b,c;0)$. Then we have the following
\begin{align*}
\zeta=&\sum_{r\geq 0,\ \underline{i}\in\mathbb{Z}_{\geq 0}^{n}}a(r,\underline{i})t^r\Big( \nabla_{G}(\sum_{l_1,\ldots,l_n\ge 0}\zeta_{l_1,\ldots,l_n}\gamma^{(n+1)l_n}t^{l_n}e^{\underline{i}-\sum l_i\epsilon_i-\epsilon_0}_{\kappa})\\
&+\sum_{\substack{l_1,\ldots,l_n\ge 0\\ i_1=2l_1+l_2+\ldots+l_n}}\delta_{l_1,\ldots,l_n}\gamma^{(n+1)l_n}t^{l_n}
e^{\underline{i}-\sum l_i\epsilon_i}_{\kappa}\Big)\\
=&\nabla_{G}\Big(\sum_{s\geq 0,\ \underline{j}\in\mathbb{Z}_{\geq 0}^{n}}a^{(1)}(s,\underline{j})t^{s}e_{\kappa}^{\underline{j}}\Big)+\sum_{\substack{s'\geq 0,\ \underline{j}'\in\mathbb{Z}_{\geq 0}^{n} \\{\rm such\ that}\  j'_1=0}}a^{(2)}(s',\underline{j}')t^{s'}e_{\kappa}^{\underline{j}'}
\end{align*}
where 
\begin{equation*}
\begin{aligned}
a^{(1)}(s,\underline{j})&=\sum_{\substack{l_1,\ldots,l_{n}\ge 0,\ l_{n}\leq s\\ \mathrm{such}\ \mathrm{that}\ \underline{j}+\sum l_i\epsilon_i+\epsilon_0\in\mathbb{Z}_{\geq 0}^{n}}} a(s-l_{n},\underline{j}+\sum l_i\epsilon_i+\epsilon_0) \gamma^{(n+1)l_n}\zeta_{l_1,\ldots,l_n}  \\
a^{(2)}(s',\underline{j}')&=\sum_{\substack{l'_1,\ldots,l'_n\ge 0,\ l'_{n}\leq s'\\ \mathrm{such}\ \mathrm{that}\ \underline{j}'+\sum l'_i\epsilon_i\in\mathbb{Z}_{\geq 0}^{n}}} a(s'-l'_{n},\underline{j}'+\sum l'_i\epsilon_i) \gamma^{(n+1)l_n}\delta_{l'_1,\ldots,l'_n}.
\end{aligned}
\end{equation*}

Note that the summation restriction in $a^{(1)}(s,\underline{j})$ is equivalent to $0\leq l_{n}\leq s$ and $0\leq l_{k-1}\leq l_{k}+j_{k}$ for $2\leq k\leq n$, which are finite summations. Hence it is well-defined in $\Omega$. Similarly, we have that $a^{(2)}(s',\underline{j}')$ is also well-defined. Recall that $c=b-\frac{1}{p-1}$. One can check that
if $a(r,i)t^re_{\kappa}^{\underline{i}}\in \mathcal{S}(b,c;0)$, then
$$a^{(1)}(s,\underline{j})t^{s}e^{\underline{j}}_{\kappa}\in \mathcal{S}(b,c;c)\ \mathrm{and}\ a^{(2)}(s,\underline{j})t^{s}e^{\underline{j}}_{\kappa}\in \mathcal{R}(b,c;0).$$
This finishes the proof of Lemma \ref{lem2}.
\end{proof}

\begin{thm}\label{thm1}
For $\kappa \in \mathbb{Z}_p\setminus \mathbb{Z}_{\ge 0}$, we have that
$$\mathcal{S}(b,c;0)=\mathcal{R}(b,c;0)\oplus  \partial^{(1)}\mathcal{S}(b,c;c).$$
\end{thm}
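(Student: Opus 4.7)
The directness of the sum follows immediately from Lemma~\ref{lem5}: since $\mathcal{R}(b,c;0)\subseteq\mathcal{R}(b,c)$ and $\partial^{(1)}\mathcal{S}(b,c;c)\subseteq\partial^{(1)}\mathcal{S}(b,c)$, their intersection is zero. The substance of the theorem is therefore the inclusion $\mathcal{S}(b,c;0)\subseteq\mathcal{R}(b,c;0)+\partial^{(1)}\mathcal{S}(b,c;c)$. The plan is to bootstrap Lemma~\ref{lem2} (which is the analogue with $\nabla_G$ in place of $\partial^{(1)}$) by exploiting the identity $\partial^{(1)}=t\frac{d}{dt}+\nabla_G$ and iterating, with the gain $c>0$ per iteration supplying convergence.

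Given $\zeta\in\mathcal{S}(b,c;0)$, first apply Lemma~\ref{lem2} to write $\zeta=\eta_0+\nabla_G(\xi_0)$ with $\eta_0\in\mathcal{R}(b,c;0)$ and $\xi_0\in\mathcal{S}(b,c;c)$, and rearrange this as $\zeta=\eta_0+\partial^{(1)}(\xi_0)-t\frac{d\xi_0}{dt}$. The key observation is that the Euler operator $t\frac{d}{dt}$ preserves each growth condition defining $\mathcal{S}(b,c;\rho)$, since multiplying the coefficient of $t^r e_{\kappa}^{\underline{i}}$ by $r$ does not decrease its $p$\text{-}adic valuation; hence $t\frac{d\xi_0}{dt}\in\mathcal{S}(b,c;c)$. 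Furthermore, the construction in the proof of Lemma~\ref{lem2} is linear in the coefficients, and the estimates on $a^{(1)}$ and $a^{(2)}$ there transfer verbatim under a shift of the auxiliary parameter, so Lemma~\ref{lem2} upgrades to the parameterized form $\mathcal{S}(b,c;\rho)=\mathcal{R}(b,c;\rho)+\nabla_G\mathcal{S}(b,c;\rho+c)$ for every $\rho\in\mathbb{R}$. Applying this at level $\rho=c$ to $t\frac{d\xi_0}{dt}$ produces $\eta_1\in\mathcal{R}(b,c;c)$ and $\xi_1\in\mathcal{S}(b,c;2c)$; continuing inductively yields sequences $\eta_k\in\mathcal{R}(b,c;kc)$ and $\xi_k\in\mathcal{S}(b,c;(k+1)c)$ satisfying the telescoping identity
$$\zeta=\sum_{j=0}^{k}(-1)^j\eta_j+\partial^{(1)}\Bigl(\sum_{j=0}^{k}(-1)^j\xi_j\Bigr)+(-1)^{k+1}t\frac{d\xi_k}{dt}.$$

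Since $c=b-\tfrac{1}{p-1}>0$, the $p$\text{-}adic precision of $\eta_k$ and $\xi_k$ grows linearly in $k$, so by the Banach-space structure of $\mathcal{R}(b,c;0)$ and $\mathcal{S}(b,c;c)$ the series $\tilde\eta:=\sum_{j\ge 0}(-1)^j\eta_j$ and $\tilde\xi:=\sum_{j\ge 0}(-1)^j\xi_j$ converge in $\mathcal{R}(b,c;0)$ and $\mathcal{S}(b,c;c)$ respectively, while the remainder $(-1)^{k+1}t\frac{d\xi_k}{dt}$ tends to $0$. Passing to the limit gives $\zeta=\tilde\eta+\partial^{(1)}(\tilde\xi)$, which completes the argument. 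The only nonroutine point in this plan is the promotion of Lemma~\ref{lem2} from the single level $\rho=0$ stated in its conclusion to arbitrary $\rho$; this is where I expect the most care is needed, and it amounts to tracking the added shift $\rho$ through the valuation estimates on the explicit coefficients $a^{(1)}(s,\underline{j})$ and $a^{(2)}(s,\underline{j})$ constructed in the proof of that lemma.
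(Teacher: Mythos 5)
Your proposal is correct and follows essentially the same route as the paper: establish directness via Lemma~\ref{lem5}, apply Lemma~\ref{lem2} to peel off an $\mathcal{R}$-part plus a $\nabla_G$-part, rewrite $\nabla_G=\partial^{(1)}-t\frac{d}{dt}$, observe that $t\frac{d}{dt}$ preserves the growth condition, and iterate with the $c>0$ gain per step supplying convergence in the Banach-space topology. The alternating signs in your telescoping identity are a cosmetic difference (the paper absorbs the minus sign into its definition of the successive remainders), and the point you flag as needing care---promoting Lemma~\ref{lem2} from $\rho=0$ to arbitrary $\rho$---is indeed used implicitly in the paper and justified exactly as you say, by the linearity and valuation-preservation of the explicit coefficient formulas in that lemma's proof (cf. Remark~\ref{remark}).
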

\begin{proof}
By Lemma \ref{lem5}, it suffices to prove $$\mathcal{S}(b,c;0)\subseteq \mathcal{R}(b,c;0)+ \partial^{(1)}\mathcal{S}(b,c;c).$$
Let $\zeta\in \mathcal{S}(b,c;0)$. It follows from Lemma \ref{lem2} that there exists $\eta_0\in \mathcal{R}(b,c;0)$
and $\epsilon_0\in \mathcal{S}(b,c;c)$ such that
$$\zeta=\eta_0+\nabla_G(\epsilon_0).$$
Let $\zeta_1=-t\frac{d \epsilon_0}{dt}$. Then
$$\zeta=\eta_0+\partial^{(1)}(\epsilon_0)+\zeta_1.$$
Since $\zeta_1\in \mathcal{S}(b,c;c)$ which increase by $c$ in valuation, there exists $\eta_1\in \mathcal{R}(b,c;c)$
and $\epsilon_1\in \mathcal{S}(b,c;2c)$ such that
$$\zeta_1=\eta_1+\nabla_G\epsilon_1.$$
Let $\zeta_1=-t\frac{d \epsilon_1}{dt}$.
Then
$$\zeta=\eta_0+\eta_1+\partial^{(1)}(\epsilon_0+\epsilon_1).$$
We repeat the process to obtain
$\zeta=\sum_{m\ge 0}\eta_m+\partial^{(1)}\sum_{m\ge 0}\epsilon_m$, with $\eta_m\in \mathcal{R}(b,c;mc)$ and
$\epsilon_m\in \mathcal{S}(b,c;(m+1)c)$. Hence $\sum_{m\ge 0}\eta_m\in \mathcal{R}(b,c;0)$ and $\sum_{m\ge 0}\epsilon_m\in \mathcal{S}(b,c;c)$ are well defined.
It follows that Theorem \ref{thm1} holds.
\end{proof}
\begin{rmk}\label{remark}
It follows from Lemma \ref{lem1} that $H^{0}(\mathcal{S}(b,c),\partial^{(1)})=0$ and $H^{1}(\mathcal{S}(b,c),\partial^{(1)})\simeq\mathcal{R}(b,c)$. From (\ref{eq8}) and Theorem \ref{thm1} we can conclude that if $\zeta\in \mathcal{S}(b,c;0)$ with ${\rm ord}_p\zeta \ge s$, then there exist $\eta\in \mathcal{R}(b,c;0)$ and $ \varepsilon\in \mathcal{S}(b,c;c)$ such that
$\zeta=\eta+\partial^{(1)}(\varepsilon)$, where ${\rm ord}_p\eta\ge s$ and ${\rm ord}_p\varepsilon\ge s$.
\end{rmk}

%Let $N$ be a nonnegative integer. We define an increasing filtration $Fil^N \mathcal{S}_{\infty,\kappa}$ to be the $\mathcal{D}_q$-module generated by $\gamma^{(n+1)r}t^re_{\kappa}^{\underline{i}}$ with $W(r,\underline{i})\le N$.
%From the proof of Lemma \ref{lem2} and Theorem \ref{thm1} we conclude the following results.
%\begin{cor}\label{cor1}
%For $\zeta\in Fil^N \mathcal{S}_{\infty,\kappa}$, there exists $a(r,\underline{i})\in \mathcal{D}_q$ with $W(r,\underline{i})\le N$ and $\eta\in Fil^{N-1} \mathcal{S}_{\infty,\kappa}$, such that
%$$\zeta=\sum_{r\ge 0,\underline{i}=(0,i_2,\cdots,i_n)} a(r,\underline{i})\gamma^{(n+1)r}t^re_{\kappa}^{\underline{i}}+ \partial^{(1)}(\eta).$$
%\end{cor}

\subsection{$H^0(\mathcal{S}(b,c), \partial)$ and $H^1(\mathcal{S}(b,c), \partial)$ }
\
\newline
\indent

In this subsection, we will show that $H^0(\mathcal{S}(b,c), \partial)=0$ and $H^1(\mathcal{S}(b,c), \partial)\cong
\mathcal{R}(b,c)$ for $\kappa\in \mathbb{Z}_p\setminus \mathbb{Z}_{\ge 0}$.
Denote by $\mathcal{F}$ the subset of $\prod_{i=1}^{\infty}\mathbb{Z}_{\ge 0}$ where every element $\mathbf{j}\in \mathcal{F}$ is of the form $\mathbf{j}=(j_1,\ldots,j_s,0,0,\ldots)$ for some $s\ge 1$, and for each $j_i>0$. Define $s(\mathbf{j}):=\max\{i:j_i>0\}$. Define $\rho(\mathbf{j}):=p^{j_1}+\cdots+p^{j_s}-s(\mathbf{j})$.

For $q=p^a$ with $a\ge 0$, we introduce the following spaces
$$\mathcal{O}_{q}:=\{\sum_{r\ge 0} a(r)\gamma^{(n+1)r/q}t^{r}:a(r)\in \mathcal{D}_q, a(r)\rightarrow 0 {\rm\ as}\  r\rightarrow \infty\}$$
and
$$\mathcal{C}_{q}:=\{\sum_{\mathbf{u}} \zeta(\mathbf{u})\gamma^{w(\mathbf{u})}t^{qm(\mathbf{u})}x^{\mathbf{u}}:\zeta(\mathbf{u})\in \mathcal{O}_{q}, \zeta(\mathbf{u})\rightarrow 0 {\rm\ as}\  w(\mathbf{u})\rightarrow \infty\}.$$
For $q=1$, we simply write $\mathcal{O}_{q}$ and $\mathcal{C}_{q}$ as  $\mathcal{O}$ and $\mathcal{C}$, respectively.
%endowed with superior norms.% as that of $\mathcal{L}(b)$ and $\mathcal{K}_{q}(b,b')$. 

 Note that for $\frac{1}{p-1}< b,b'\le\frac{p}{p-1}$, we have $\mathcal{K}_{q}(b,b';0)\subseteq \mathcal{C}_{q}$. We can define the complex $\Omega^{\bullet}(\mathcal{C}_{q},D_{t^{q}})$ similarly as 
 $\Omega^{\bullet}(\mathcal{K}_q(b,b),D_{t^{q}})$. As \cite[Theorem 3.2]{HS172} showed that $H^{i}(\mathcal{C}_{q},D_{t^{q}})$ is acyclic except $i=n$, where $H^{n}(\mathcal{C}_{q},D_{t^{q}})$ is a free $\mathcal{O}_{q}\text{-}$module of rank $n+1$ with the same basis as $H^{n}(\mathcal{K}_{q}(b,b),D_{t^{q}})$. 
 
 Define the following ring
$$\mathfrak{M}_{t}:=\{f(t)\in\Omega[[t]]: f\ \mathrm{is}\ p\text{-}{\rm adically}\ \mathrm{analytic}\ \mathrm{in\ the\ neighbourhood\  of\ }\ 0\}.$$
We extend scalars by lifting the base coefficient ring to $\mathfrak{M}_t$. Then we have 
\begin{equation} \label{relation}
H^{n}(\mathcal{C}_{q},D_{t^{q}})\otimes_{\mathcal{O}_{q}}\mathfrak{M}_{t}=H^{n}(\mathcal{K}_{q}(b,b),D_{t^{q}})\otimes_{\mathcal{L}_{q}(b)}\mathfrak{M}_{t}    
 \end{equation}
 as free $\mathfrak{M}_{t}\text{-}$modules. 
 
For monomial $t^{qm(\mathbf{u})+r}x^{\mathbf{u}}$, we let $W_q(r;\mathbf{u})=(n+1)r/q+w(\mathbf{v})$.
We define filtrations on $\mathcal{C}_{q}$ and $\mathcal{K}_{q}(b,b')$ as follows:
 $$\Fil^{N}\mathcal{C}_{q}:=\{\mathcal{D}_q\text{-}{\rm module\ generated\ by\ }  \gamma^{\frac{(n+1)r}{q}+w(\mathbf{u})}t^{qm(\mathbf{u})+r}x^{\mathbf{u}}{\rm such\ that\ } W_{q}(r;\mathbf{u})\le N\}$$
 and 
 $$\Fil^{N}\mathcal{K}_{q}(b,b';\rho):=\{\sum_{\substack{r\geq 0,\mathbf{v}\in \mathbb{Z}^n \\ \mathrm{such}\ \mathrm{that}\ W_q(r;\mathbf{v})\leq N}} a(r,\mathbf{v})t^{r+qm(\mathbf{v}) }x^{\mathbf{v}}\in\mathcal{K}_{q}(b,b';\rho)\}.$$
For $\zeta\in\Fil^{N}\mathcal{K}_{q}(b,b;0)\subset \Fil^{N}\mathcal{C}_{q}$, Haessig and Sperber \cite [Theorem 4.3]{HS24} proved that there exists $c(l,\zeta)\in \mathcal{L}(\frac{1}{q}(1+\frac{1}{p-1});l-N)$ such that
$$\zeta=\sum_{l=0}^n c(l,\zeta)e_l+\sum_{i=1}^nD_{i,t^{q}}(\zeta_i)$$
with $\zeta_i\in \mathcal{C}_{q}$ and 
$$\sum_{l=0}^n c(l,\zeta)e_l=\sum_{l=0}^{\min\{N,n\}} a(l,\zeta)e_l+\sum_{\mathbf{j}\in \mathcal{F}}\sum_{l}p^{\rho(\mathbf{j})}a(l,\zeta;\mathbf{j})e_l.$$
On the other hand, by \cite[Theorem 3.4]{HS172}, we also have that $\zeta$ can be written as 
 $$\zeta=\sum_{l=0}^n b(l,\zeta)e_l+\sum_{l=1}^nD_{l,t^{q}}(\eta_l)$$
 with $b(l,\zeta)\in \mathcal{L}(\frac{b}{q},0)$  and $\eta_l\in \mathcal{K}_{q}(b,b;0)\subset \mathcal{C}_{q}$. Then we have $c(l,\zeta)=b(l,\zeta)$ for $l=0,1,\ldots,n$. We can similarly define $\partial$ and $\partial^{(1)}$ acting on $H^n (\mathcal{C},D_t)$ as that on $H^n (\mathcal{K}(b,b),D_t)$. Then we have:

\begin{lem}\label{lem3}
For each $0\le l\le n$, we may write in $H^n(\mathcal{C},D_{t})$
$$\partial^{(1)}(e_{l})=\partial(e_{l})+\sum_{\mathbf{j}\in \mathcal{F}}\sum_{m\le \min\{\rho(\mathbf{j})+l+1,n\}}p^{\rho(\mathbf{j})}b(m,l;\mathbf{j})e_m \mod D_{t},$$
where $b(m,l;\mathbf{j})=\sum_{r(n+1)\le \rho(\mathbf{j})+l-m+1} a(r)\gamma^{(n+1)r} t^r$ with ${\rm ord}_p a(r)\ge 0$.
\end{lem}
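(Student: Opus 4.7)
The plan is to compute $\partial^{(1)}(e_l) - \partial(e_l)$ explicitly and reduce the result modulo $D_t$ to the basis $\{e_m\}$ of $H^n(\mathcal{C}, D_t)$ using the algorithm of Haessig--Sperber. Since $e_l = \gamma^l x_1 \cdots x_l$ is independent of $t$, the operator $t\frac{d}{dt}$ annihilates $e_l$, so
$$\partial^{(1)}(e_l) - \partial(e_l) = G(e_l) - t\frac{dH}{dt} \cdot e_l = G(e_l) - \sum_{i=0}^{\infty} \gamma_i p^i g_i e_l, \qquad g_i := \frac{t^{p^i}}{(x_1 \cdots x_n)^{p^i}}.$$
It therefore suffices to expand each $\gamma_i p^i g_i e_l$ in the basis modulo $D_t$ and verify that the $i=0$ contribution cancels $G(e_l)$ at leading order, while the remaining terms assemble into the stated sum.

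For the $i=0$ term, $\gamma_0 g_0 e_l = \gamma^{l+1} t/(x_{l+1} \cdots x_n)$: when $l = n$ this is already $\gamma^{n+1} t = G(e_n)$, with nothing further to do. When $l < n$, I would iteratively apply the Koszul boundaries $D_{l+1, t}, \ldots, D_{n, t}$, using the explicit expansion
$$x_i \frac{\partial H}{\partial x_i} = \gamma(x_i - g_0) + \sum_{j \ge 1}\gamma_j p^j(x_i^{p^j} - g_j).$$
The $j=0$ pieces at each step trade $\gamma t/(x_1 \cdots x_n)$ for $\gamma x_i$ exactly modulo $D_{i, t}$, so the leading reduction of $\gamma^{l+1} t/(x_{l+1} \cdots x_n)$ yields $e_{l+1} = G(e_l)$; the higher-$j$ pieces contribute correction monomials carrying $\gamma_j p^j$ factors which, after further reduction, assemble into the $\mathbf{j}$-indexed family with prefactor $p^{\rho(\mathbf{j})}$.

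For $i \ge 1$, the monomial $\gamma_i p^i g_i e_l$ has $x$-exponent vector $\mathbf{u} = (1-p^i, \ldots, 1-p^i, -p^i, \ldots, -p^i)$ with $l$ entries equal to $1-p^i$, so $w(\mathbf{u}) = l + p^i$ and the element lies in $\Fil^{l+p^i}\mathcal{C}$. Applying \cite[Theorem 4.3]{HS24}, I obtain an expansion of the form $\sum_{m \le \min\{l+p^i, n\}} a(m) e_m + \sum_{\mathbf{j} \in \mathcal{F}}\sum_m p^{\rho(\mathbf{j})} a(m;\mathbf{j}) e_m$ modulo $D_t$, in which the intrinsic factor $p^i$ is absorbed into the $\mathcal{F}$-family corresponding to $\mathbf{j} = (i, 0, 0, \ldots)$ and its refinements. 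The bound $(n+1)r \le \rho(\mathbf{j}) + l - m + 1$ on the $t$-degree of the final coefficient $b(m, l; \mathbf{j})$ then follows from the defining weight inequality $W_1(r; \mathbf{v}) \le N$ for $\Fil^N\mathcal{C}$, combined with the $\gamma^{(n+1)r}$-normalization built into $\mathcal{O}$.

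The hard part will be the combinatorial bookkeeping: one must consolidate all contributions arising from the iterated Koszul reductions at $i = 0$ and from the direct reductions at each $i \ge 1$ into a single sum indexed by $\mathcal{F}$, and verify that the successive $p^{j_k}$-shifts produced along the way compose into exactly the prefactor $p^{\rho(\mathbf{j})}$ together with the sharp $t$-degree bound. Once this identification is made, the claim follows directly from the Haessig--Sperber reduction theorem and the cancellation of the leading $G(e_l)$ contribution against the $i=0$ piece.
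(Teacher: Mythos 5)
Your overall strategy is the same as the paper's: express $\partial^{(1)}(e_l)-\partial(e_l)=G(e_l)-\sum_{i\ge 0}\gamma_i p^i g_i\,e_l$ and reduce to the basis $\{e_m\}$ via the Haessig--Sperber filtered cohomology. Your expansion of $t\frac{dH}{dt}$ and the filtration computation $w(\mathbf{u})=l+p^i$ for $g_i e_l$ are both correct.

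However, you miss the one observation that makes the bookkeeping you defer essentially trivial. For $l<n$, the single relation
\begin{equation*}
D_{l+1,t}(e_l)=e_l\Bigl(\gamma x_{l+1}-\gamma\frac{t}{x_1\cdots x_n}+\sum_{j\ge 1}\gamma_j p^j\bigl(x_{l+1}^{p^j}-K_t(t^{p^j},x^{p^j})\bigr)\Bigr)
\end{equation*}
already exhibits $\gamma^{l+1}t/(x_{l+1}\cdots x_n)\equiv e_{l+1}-\sum_{j\ge 1}\gamma_j p^j(x_{l+1}^{p^j}-g_j)e_l\mod D_t$ in one step; there is no need to iterate through $D_{l+1},\ldots,D_n$ as you propose, and that ``iteration'' suggests you are picturing the wrong mechanism. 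More importantly, when you then add back the remaining $-\sum_{i\ge 1}\gamma_i p^i g_i\,e_l$, the $g_j$ and $g_i$ pieces cancel identically, and the entire correction collapses to the $t$-free polynomial $\sum_{i\ge 1}\gamma_i p^i x_{l+1}^{p^i}e_l$ (for $l=n$ it is instead $\sum_{i\ge 1}\gamma_i p^i K_t(t^{p^i},x^{p^i})e_n$). Your plan, by contrast, leaves you reducing both a family of $t$-bearing monomials $g_i e_l$ and the separate $D_{l+1}$-leftover family, and then ``consolidating'' them -- that consolidation is exactly the cancellation you failed to notice. Once you have the single $t$-free correction in $\Fil^{l+p^i}\mathcal{C}$ with prefactor $\gamma_ip^i=p^{p^i-1}\tau_i\gamma^{p^i}$, the $\mathbf{j}$-indexed form with the $p^{\rho(\mathbf{j})}$ factor and the stated $t$-degree bound on $b(m,l;\mathbf{j})$ follows directly from the Haessig--Sperber reduction lemma (\cite[Lemma 4.4(2)]{HS24}), which is what the paper does. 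So: right skeleton, but the ``hard combinatorial part'' you leave open is precisely where the argument must change, and as written it is a genuine gap.
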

\begin{proof}
Let $0\le l\le n-1$. We have that $\partial^{(1)}(e_{l})=G(e_l)=e_{l+1}$.
From the definition we have that
$$\partial(e_{l})=e_lt\frac{d}{dt}H(t,x)=e_l(\frac{\gamma t}{x_1\cdots x_n}+\sum_{i=1}^{\infty}\gamma_ip^iK_t(t^{p^i},x^{p^i})), $$
where $K_t(t,x)=\frac{t}{x_1\cdots x_n}$.
Note that
\begin{align*}
D_{l+1,1}(e_l)&=e_lx_{l+1}\frac{\partial}{d x_{l+1}}H(t,x)\\
&=e_l(\gamma x_{l+1}-\frac{\gamma t}{x_1\cdots x_n}+\sum_{i=1}^{\infty}\gamma_i p^ix_{l+1}^{p^i}-\sum_{i=1}^{\infty}\gamma_ip^iK_t(t^{p^i},x^{p^i})).
\end{align*}
Hence
$$\partial(e_l)=\partial^{(1)}(e_l)+\sum_{i=1}^{\infty}\gamma_i p^ix_{l+1}^{p^i}e_l \mod D_t$$
for $0\le l\le n-1$.
For $l=n$, we have that
$$\partial(e_{n})=e_n(\frac{\gamma t}{x_1\cdots x_n}+\sum_{i=1}^{\infty}\gamma_ip^iK_t(t^{p^i},x^{p^i}))=\partial^{(1)}(e_n)+\sum_{i=1}^{\infty}\gamma_ip^iK_t(t^{p^i},x^{p^i})e_n.$$
We may write $\gamma_ip^i=p^{p^i-1}\tau_i\gamma^{p^i}$ with ${\rm ord}_p \tau_i=0$.
Let $\nu_{p^i,l}:=-\tau_i \gamma^{p^i}x_{l+1}^{p^i}$ for $0\le l\le n-1$ and $\nu_{p^i,n}:=-\tau_i\gamma^{p^i}K_t(t^{p^i},x^{p^i})$. Then $\nu_{p^i,l}\in\Fil^{p^i}\mathcal{C}$.
By the same argument as \cite[Lemma 4.4(2)]{HS24} we have that Lemma \ref{lem3} holds.

%$$\partial(e_l)=\partial^{(1)}(e_l)+\sum_{i\ge 1}p^{p^i-1}\nu_{p^i,l}e_l$$
%with $\nu_{p^i,l}e_l\in \Fil^{p^i+l}\mathcal{C}$ for all $0\leq l\leq n$.
%By \cite [Theorem 4.3]{HS24}, we have that
%$$\nu_{p^i,l}e_l=\sum a(m,i,l)e_m+\sum_{\mathbf{j}\in \mathcal{F}}\sum p^{\rho(\mathbf{j})}a(m,i,l;\mathbf{j})e_m,$$
%where the first sum runs over $0\le m\le \min\{n,p^i+l\}$, $$a(m,i,l)=\sum_{(n+1)r\le p^i+l-m}a(r)\gamma^{(n+1)r}t^r$$
%and ${\rm ord}_p a(r)\ge 0 $, and the second sum runs over $0\le m\le \min\{n,\rho(\mathbf{j})+p^i+l\}$, $$a(m,i,l;\mathbf{j})=\sum_{(n+1)r\le \rho(\mathbf{j})+p^i+l-m}c(r)\gamma^{(n+1)r}t^r$$ and ${\rm ord}_p c(r)\ge 0$.
%Hence
%$$\sum_{r\ge 1}p^{p^i-1}\nu_{p^i,l}e_l=\sum_{r\ge 1}\Big(\sum p^{p^i-1}a(m,i,l)e_m+\sum_{\mathbf{j}\in \mathcal{F}}\sum p^{p^i-1+\rho(\mathbf{j})}a(m,i,l;\mathbf{j})e_m\Big).$$
%Set $\mathbf{i}=(i,0,\cdots,0)\in \mathcal{F}$. Then by the definition of $\mathcal{F}$ we see that $i\ge 1$. It follows that
%$$p^{p^i-1}a(m,i,l)=p^{\rho(\mathbf{i})}b(m,l;\mathbf{i})$$
%with $b(m,l;\mathbf{i})=a(m,i,l)$. Let $\mathbf{j}=(j_1,\cdots,j_s,0,\cdots,0)$ and 
%$\mathbf{j}'=(j_1,\cdots,j_s,i,0,\cdots,0)\in \mathcal{F}$.
%Then
%$$p^{\rho(\mathbf{j})+p^i-1}a(m,i,l;\mathbf{j})=p^{\rho(\mathbf{j}')} b(m,b;\mathbf{j}')$$
%with $b(m,b;\mathbf{j}')=a(m,i,l;\mathbf{j})$  such that ${\rm ord}_pb(m,b;\mathbf{j}')\ge 0$.
%It follows that Lemma \ref{lem3} holds.

\end{proof}

\begin{thm}\label{thm3}
Let $\kappa \in \mathbb{Z}_p\setminus \mathbb{Z}_{\ge 0}$.
Then $\mathcal{S}(b,c;0)=\mathcal{R}(b,c;0)+ \partial\mathcal{S}(b,c;c).$
\end{thm}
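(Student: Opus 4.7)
The plan is to deduce Theorem \ref{thm3} from Theorem \ref{thm1}, the analogous decomposition for $\partial^{(1)}$, by controlling the discrepancy between $\partial$ and $\partial^{(1)}$ with Lemma \ref{lem3}. Given $\zeta\in\mathcal{S}(b,c;0)$, I first apply Theorem \ref{thm1} to write
$$\zeta=\eta_0+\partial^{(1)}(\varepsilon_0),\qquad \eta_0\in\mathcal{R}(b,c;0),\ \varepsilon_0\in\mathcal{S}(b,c;c),$$
and then split
$$\zeta=\eta_0+\partial(\varepsilon_0)+\Delta_0,\qquad \Delta_0:=(\partial^{(1)}-\partial)(\varepsilon_0).$$
The error $\Delta_0$ becomes the input for the next iteration.

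The crux is a filtration estimate for $\Delta_0$. By the definitions of $\partial$ and $\partial^{(1)}$ on $\mathcal{S}(b,c)$, the operator $(\partial^{(1)}-\partial)$ acts on each monomial $e_\kappa^{\underline{i}}$ by replacing a single factor $e_l$ by $(\partial^{(1)}-\partial)(e_l)$. Lemma \ref{lem3} expresses the latter as a sum of contributions $p^{\rho(\mathbf{j})}b(m,l;\mathbf{j})e_m$ indexed by $\mathbf{j}\in\mathcal{F}$ and $m\le\min\{\rho(\mathbf{j})+l+1,n\}$, with $b(m,l;\mathbf{j})$ supported in $t$-degrees $r$ subject to $(n+1)r\le\rho(\mathbf{j})+l-m+1$. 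Multiplying back by the remaining $e$-factors swaps $e_l$ for $e_m$ inside a weight-$w(\underline{i})$ monomial, shifting the weight by $m-l$ and the $t$-degree by $r$. Combining the gain $\rho(\mathbf{j})+(n+1)r/(p-1)$ in valuation with the change in the threshold $b(n+1)r+c(m-l)$ and using $c=b-1/(p-1)$, one finds that each coefficient of $\Delta_0$ exceeds its threshold by at least $(1-c)\rho(\mathbf{j})-c$. Since $\rho(\mathbf{j})\ge p-1$ for all $\mathbf{j}\in\mathcal{F}$, this quantity is bounded below by a fixed positive $\delta$ in the admissible range of $b$, yielding
$$\Delta_0\in\mathcal{S}(b,c;\,c+\delta).$$

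Finally, I iterate. Applying Theorem \ref{thm1} together with the valuation-preserving form stated in Remark \ref{remark} to $-\Delta_0$ produces $\eta_1\in\mathcal{R}(b,c;c+\delta)$ and $\varepsilon_1\in\mathcal{S}(b,c;c+\delta)$ with $-\Delta_0=\eta_1+\partial^{(1)}(\varepsilon_1)$; a further split $\partial^{(1)}(\varepsilon_1)=\partial(\varepsilon_1)+\Delta_1$ leaves an error $\Delta_1\in\mathcal{S}(b,c;\,c+2\delta)$ by the same estimate. Continuing inductively produces sequences whose valuations grow at least like $k\delta$, so $\eta:=\sum_k\eta_k$ converges in $\mathcal{R}(b,c;0)$ and $\varepsilon:=\sum_k\varepsilon_k$ converges in $\mathcal{S}(b,c;c)$, yielding $\zeta=\eta+\partial(\varepsilon)$ as desired. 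The main obstacle is the valuation bookkeeping in Step 2: one must verify that the factor $p^{\rho(\mathbf{j})}$ strictly outweighs the potential $c(m-l)$ loss when $m>l$, and the tight inequality $(n+1)r+(m-l)\le\rho(\mathbf{j})+1$ supplied by Lemma \ref{lem3} is precisely what makes this work.
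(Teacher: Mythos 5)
Your proposal follows essentially the same route as the paper: apply Theorem~\ref{thm1} to get a $\partial^{(1)}$-decomposition, rewrite $\partial^{(1)}$ as $\partial$ plus an error controlled by Lemma~\ref{lem3}, then iterate using Remark~\ref{remark} and take a $p$-adic limit. The only difference is organizational: the paper indexes the iterated error terms by tuples $(\mathbf{j}_1,\dots,\mathbf{j}_s)\in\mathcal{F}^s$ with valuation gain $\sum\rho(\mathbf{j}_i)$, while you bundle each layer of error into a single $\Delta_k$ and track a fixed gain $\delta$ per step. One small bookkeeping slip: when you estimate $\Delta_0$, you write the excess as $(1-c)\rho(\mathbf{j})-c$, apparently dropping the extra $+c$ that comes from $\varepsilon_0\in\mathcal{S}(b,c;c)$; including it gives $(1-c)\rho(\mathbf{j})$, which is what the iteration actually provides. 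This does not change the conclusion, but it does mean your claimed membership $\Delta_0\in\mathcal{S}(b,c;c+\delta)$ (with a fixed positive $\delta$) requires $(1-c)(p-1)>c$, i.e.\ $c<(p-1)/p$; this holds in the range $b\le 1$ used from Section~4 onward, but is worth noting since Section~3 allows $b$ up to $p/(p-1)$.
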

\begin{proof}

 For $\zeta\in \mathcal{S}(b,c;0)$, by Theorem \ref{thm1}, there exist $a(r,\underline{i})\in \Omega$ such that ${\rm ord}_pa(r,\underline{i})\ge b(n+1)r+cw(\underline{i})$ and
$\eta_0\in \mathcal{S}(b,c;c)$ satisfying that
$$\zeta=\sum_{r\ge 0,\underline{i}=(0,i_2,\cdots,i_n)}a(r,\underline{i})t^re_{\kappa}^{\underline{i}}+\partial^{(1)}(\eta_0).$$
Write $\eta_0=\sum_{s\ge 0,\underline{i}\in \mathbb{Z}_{\ge 0}^n}b(s,\underline{i})t^se_{\kappa}^{\underline{i}}$ with ${\rm ord}_pb(s,\underline{i})\ge b(n+1)s+cw(\underline{i})+c$.

From the definition of $\partial^{(1)}$, we have that
\begin{align*}
\partial^{(1)}(t^se_{\kappa}^{\underline{i}})
&=t\frac{d(t^se_{\kappa}^{\underline{i}})}{dt}+t^s\partial^{(1)}(e_{\kappa}^{\underline{i}})\\
&=t\frac{d(t^se_{\kappa}^{\underline{i}})}{dt}+\kappa^{\underline{i}}t^s\big((\kappa-|\underline{i}|)e^{\underline{i}}\partial^{(1)}(e_0)+
\sum_{l=1}^ni_l e_1^{i_1}\cdots e_{l}^{i_l-1}\cdots e_n^{i_n}\partial^{(1)}(e_l)\big).
\end{align*}
From (\ref{relation}), we first lift the basis $\{e_{i}\}_{0\leq i\leq n}$ of $H^{n}(\mathcal{K}(b,b),D_{t})$ to $\{e_i\otimes 1\}_{1\leq i\leq n}$ as a basis of $H^{n}(\mathcal{C},D_{t})\otimes_{\mathcal{O}}\mathfrak{M}_t$. Then by Lemma \ref{lem3}, we have that
\begin{align}\label{eq527}
\partial^{(1)}(t^se_{\kappa}^{\underline{i}})
&=\partial(t^se_{\kappa}^{\underline{i}})
+\kappa^{\underline{i}}t^s\Big((\kappa-|\underline{i}|)e^{\underline{i}}\sum_{\mathbf{j}\in \mathcal{F}}\sum_{k\le \rho(\mathbf{j})+1}p^{\rho(\mathbf{j})}b(k,0;\mathbf{j})e_k\nonumber\\
&+\sum_{l=1}^ni_le_1^{i_1}\cdots e_l^{i_l-1}\cdots e_n^{i_n}\sum_{\mathbf{j}\in \mathcal{F}}\sum_{m\le \rho(\mathbf{j})+l+1}p^{\rho(\mathbf{j})}b(m,l;\mathbf{j})e_m\Big)\nonumber\\
&=\partial(t^se_{\kappa}^{\underline{i}})+ \sum_{\mathbf{j}\in \mathcal{F}}p^{\rho(\mathbf{j})}\mathbf{w}(\mathbf{j}),
\end{align}
where 
\begin{align*}
\mathbf{w}(\mathbf{j})&= \kappa^{\underline{i}}t^s\Big((\kappa-|\underline{i}|)e^{\underline{i}}\sum_{m\le \rho(\mathbf{j})+1}b(m,0;\mathbf{j})e_m
\\  &+\sum_{l=1}^ni_le_1^{i_1}\cdots e_l^{i_l-1}\cdots e_n^{i_n}\sum_{m\le \min\{\rho(\mathbf{j})+l+1,n\}}b(m,l;\mathbf{j})e_m\Big) \\
&=\sum_{m\leq\rho(\mathbf{j})+1}t^{s}b(m,0;\mathbf{j})(e_{1}^{i_{1}}\cdots e_{m}^{i_{m}+1}\cdots e_{n}^{i_{n}})_{\kappa} \\
&+\sum_{m\leq\min\{\rho(\mathbf{j})+l+1,n\}}i_{l}t^{s}b(m,l;\mathbf{j})(e_{1}^{i_{1}}\cdots e_{m}^{i_{m}+1}\cdots e_{l}^{i_{l}-1}\cdots e_{n}^{i_{n}})_{\kappa}
\end{align*}
such that $b(m,l;\mathbf{j})$ is of the form $\sum_{r(n+1)\le \rho(\mathbf{j})+l-m+1} a(r)\gamma^{(n+1)r} t^r$ with ${\rm ord}_p a(r)\ge 0$.
By Lemma \ref{lem3} we see that the total weight of each monomial term of the form $t^{r}e^{\underline{j}}_{\kappa}$ in $\mathbf{w}(\mathbf{j})$ is less than $(n+1)s+w(\underline{i})+\rho(\mathbf{j})+1$. 
Note that ${\rm ord}_pb(s,\underline{i})\ge b(n+1)s+cw(\underline{i})+c$. 
%\textcolor{blue}{We pull the basis $\{e_i\}$ back into $H^{n}(\mathcal{K}(b,b),D_{t})$ via (\ref{relation})}
%Hence we can view the basis $\{e_{i}\}_{0\leq i\leq n}$ as elements in $H^{n}(\mathcal{K}(b,b),D_{t})$.
We find that $\sum b(s,\underline{i})p^{\rho(\mathbf{j})}\mathbf{w}(\mathbf{j})\in \mathcal{S}(b,c;0)$ such that ${\rm ord}_p(\sum b(s,\underline{i})p^{\rho(\mathbf{j})}\mathbf{w}(\mathbf{j}))\ge \rho(\mathbf{j})$.
Thus there exists  $\mathbf{w}(\mathbf{j}_1)\in \mathcal{S}(b,c;0)$ with ${\rm ord}_p\mathbf{w}(\mathbf{j}_1)\ge \rho(\mathbf{j}_1)$ such that
\begin{equation}\label{eq10}
\zeta=\sum_{r\ge 0,\underline{i}=(0,i_2,\cdots,i_n)}a(r;\underline{i})t^re_{\kappa}^{\underline{i}}+\partial(\eta_0)+\sum_{\mathbf{j}_1\in \mathcal{F}}\mathbf{w}(\mathbf{j}_1).
\end{equation}
Then from Remark \ref{remark} we conclude that for each $\mathbf{w}(\mathbf{j}_1)$, there exists $a^{\mathbf{j}_1}(r;\underline{i})\in \Omega$ with ${\rm ord}_p a^{\mathbf{j}_1}(r;\underline{i})\ge \rho(\mathbf{j}_1)$,
$\mathbf{w}(\mathbf{j}_1,\mathbf{j}_2)\in \mathcal{S}(b,c;0)$ with ${\rm ord}_p\mathbf{w}(\mathbf{j}_1,\mathbf{j}_2)\ge \rho(\mathbf{j}_1)+\rho(\mathbf{j}_2)$ and $\eta_1\in \mathcal{S}(b,c;c)$ with ${\rm ord}_p(\eta_1)\ge \rho(\mathbf{j}_1)$ such that
$$\mathbf{w}(\mathbf{j}_1)=\sum_{r\ge 0,\underline{i}=(0,i_2,\cdots,i_n)} a^{\mathbf{j}_1}(r;\underline{i})t^re_{\kappa}^{\underline{i}}+\sum_{\mathbf{j}_2\in \mathcal{F}}w(\mathbf{j}_1,\mathbf{j}_2)+ \partial(\eta_1).$$
Then
\begin{align*}
\zeta=&\sum_{\substack{r\ge 0\\ \underline{i}=(0,i_2,\cdots,i_n)}}a(r;\underline{i})t^re_{\kappa}^{\underline{i}}+ \sum_{\mathbf{j}_1\in \mathcal{F}}\Big(\sum_{\substack{r\ge 0\\ \underline{i}=(0,i_2,\cdots,i_n)\\
}}  a^{\mathbf{j}_1}(r;\underline{i})t^re_{\kappa}^{\underline{i}}
+\sum_{\mathbf{j}_2\in \mathcal{F}}w(\mathbf{j}_1,\mathbf{j}_2)+ \partial(\eta_1)\Big).
\end{align*}

Recursively doing above procedure we have that
%$$\zeta=\sum_{\substack{r\ge 0\\ \underline{i}=(0,i_2,\cdots,i_n)}}a(r;\underline{i})\gamma^{(n+1)r}t^re_{\kappa}^{\underline{i}}+ \sum_{s\ge 1}\sum_{\mathbf{j}_1,\ldots,\mathbf{j}_s\in \mathcal{F}}\sum_{\substack{r\ge 0\\ \underline{i}=(0,i_2,\cdots,i_n)}} p^{\rho(\mathbf{j}_1)+\cdots+\rho(\mathbf{j}_s)} a^{\mathbf{j}_1,\cdots,\rho(\mathbf{j}_s)}(r;\underline{i})\gamma^{(n+1)r}t^re_{\kappa}^{\underline{i}} \mod \partial. $$
%That is,
\begin{align*}
\zeta=& \sum_{s\ge 1}\sum_{\mathbf{j}_1,\ldots,\mathbf{j}_s\in \mathcal{F}} \sum_{\substack{r\ge 0\\ \underline{i}=(0,i_2,\cdots,i_n)}} a^{\mathbf{j}_1,\cdots,\mathbf{j}_s}(r;\underline{i})t^re_{\kappa}^{\underline{i}} \\
+&\sum_{\substack{r\ge 0\\ \underline{i}=(0,i_2,\cdots,i_n)}}a(r;\underline{i})t^re_{\kappa}^{\underline{i}}
+\partial(\eta_0+\eta_1+\cdots+\eta_s),
\end{align*}
where ${\rm ord}_p \eta_j\ge \rho(\mathbf{j}_1)+\cdots+\rho(\mathbf{j}_{s})$ for $1\le j\le s$ and
$${\rm ord}_p  a^{\mathbf{j}_1,\cdots,\mathbf{j}_s}(r;\underline{i})\ge \rho(\mathbf{j}_1)+\cdots+\rho(\mathbf{j}_{s}).$$
%Hence $$\sum_{s\ge 1}\sum_{\mathbf{j}_1,\ldots,\mathbf{j}_s\in \mathcal{F}} \sum_{\substack{r\ge 0\\ \underline{i}=(0,i_2,\cdots,i_n)}} a^{\mathbf{j}_1,\cdots,\mathbf{j}_s}(r;\underline{i})t^re_{\kappa}^{\underline{i}}$$ is well defined in $\mathcal{S}(b,c;0)$ and $\eta_0+\eta_1+\cdots+\eta_s\in \mathcal{S}(b,c;c)$. 
 Theorem \ref{thm3} follows by letting  $s\rightarrow+\infty$.
\end{proof}

To give the explicit description of $H_{\kappa}^1(\mathcal{S}(b,c))$, it remains to show $\mathcal{R}(b,c)\cap \partial\mathcal{S}(b,c)=\{0\}.$
\begin{thm}\label{thm11}
 For $\kappa \in \mathbb{Z}_p\setminus \mathbb{Z}_{\ge 0}$, we have that
$$\mathcal{R}(b,c)\cap \partial\mathcal{S}(b,c)=\{0\}.$$ Then $H_{\kappa}^1(\mathcal{S}(b,c))\cong \mathcal{R}(b,c)$ and $H_{\kappa}^0(\mathcal{S}(b,c))=0$.
\end{thm}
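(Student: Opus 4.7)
The plan is to exploit the near-equality between $\partial$ and $\partial^{(1)}$ encoded in Lemma \ref{lem3} and equation (\ref{eq527}): the discrepancy $(\partial^{(1)}-\partial)(t^{s}e_{\kappa}^{\underline{i}})=\sum_{\mathbf{j}\in\mathcal{F}}p^{\rho(\mathbf{j})}\mathbf{w}(\mathbf{j})$ is $p$-adically small, because $\rho(\mathbf{j})\geq p-1$ for every $\mathbf{j}\in\mathcal{F}$. This smallness permits a bootstrap on $p$-adic order, in the spirit of the proof of Theorem \ref{thm3}, that reduces the claim for $\partial$ to the known splitting for $\partial^{(1)}$ from Theorem \ref{thm1}, together with the kernel triviality supplied by Lemmas \ref{lem1} and \ref{lem5}.

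First I would take $\eta=\partial\zeta\in\mathcal{R}(b,c)\cap\partial\mathcal{S}(b,c)$ with $\zeta\in\mathcal{S}(b,c;\rho)$, and apply (\ref{eq527}) termwise to $\zeta$ to obtain
$$\partial^{(1)}\zeta \;=\; \eta + W,$$
where $W\in\mathcal{S}(b,c)$ has $p$-adic order at least $\rho+(p-1)$ (via the explicit factors $p^{\rho(\mathbf{j})}$). Theorem \ref{thm1} and Remark \ref{remark} then supply $\eta'\in\mathcal{R}(b,c)$ and $\varepsilon\in\mathcal{S}(b,c)$, both of order at least that of $W$, with $W=\eta'+\partial^{(1)}\varepsilon$. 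Substituting yields
$$\partial^{(1)}(\zeta-\varepsilon) \;=\; \eta + \eta',$$
where the left-hand side lies in $\partial^{(1)}\mathcal{S}(b,c)$ and the right-hand side in $\mathcal{R}(b,c)$; Lemma \ref{lem5} forces both to vanish, and Lemma \ref{lem1} then gives $\zeta=\varepsilon$ and $\eta=-\eta'$. Both $\zeta$ and $\eta$ have thus gained at least $p-1$ in $p$-adic order.

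Iterating this step $m$ times places $\zeta$ in $\mathcal{S}(b,c;\rho+m(p-1))$ for every $m\geq 1$; by Banach completeness of $\mathcal{S}(b,c)$ we conclude $\zeta=0$ and hence $\eta=0$, proving $\mathcal{R}(b,c)\cap\partial\mathcal{S}(b,c)=\{0\}$. Combining this with Theorem \ref{thm3} gives the direct-sum decomposition $\mathcal{S}(b,c)=\mathcal{R}(b,c)\oplus\partial\mathcal{S}(b,c)$, whence $H^{1}_{\kappa}(\mathcal{S}(b,c))\cong\mathcal{R}(b,c)$. For $H^{0}_{\kappa}(\mathcal{S}(b,c))=\ker\partial$, the identical bootstrap, now with $\eta=0$ from the outset, shows that any $\zeta$ with $\partial\zeta=0$ must itself vanish.

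The main obstacle will be tracking the $p$-adic estimates cleanly through each step of the iteration: specifically, verifying that the element $\varepsilon$ produced by Theorem \ref{thm1} and Remark \ref{remark} truly has $p$-adic order at least $\rho+(p-1)$, so that a subsequent round of correction strictly improves the order by another $p-1$. This rests on the explicit bound $\operatorname{ord}_{p}b(m,l;\mathbf{j})\geq 0$ in Lemma \ref{lem3} and the uniform lower bound $\rho(\mathbf{j})\geq p-1$ for $\mathbf{j}\in\mathcal{F}$, which together guarantee the geometric convergence rate of the bootstrap.
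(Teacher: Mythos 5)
Your proposal is correct and rests on the same underlying idea as the paper's proof -- compare $\partial$ with $\partial^{(1)}$ via (\ref{eq527}) and bootstrap on $p$-adic order -- but it carries this out along a slightly different route. Where the paper decomposes the discrepancy using Theorem~\ref{thm3} (the $\partial$-decomposition), producing an infinite string $\zeta_0, \zeta_1, \ldots$ with $\partial$-corrections and applying Lemma~\ref{lem5} and Lemma~\ref{lem1} only once after passing to the limit $\eta = \partial^{(1)}(\sum_i\zeta_i) + \sum_j\eta_j$ and then extracting a contradiction from $\zeta_0 = -\sum_{i\ge 1}\zeta_i$, you instead decompose via Theorem~\ref{thm1} (the $\partial^{(1)}$-decomposition together with Remark~\ref{remark}), apply the two kernel/disjointness lemmas at \emph{each} step, and read off the identity $\zeta = \varepsilon$ directly, thereby improving the bound on $\operatorname{ord}_p\zeta$ without assembling the infinite series. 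Your version is a bit more direct in that it avoids the contradiction framing and the bookkeeping of the accumulated sums. One quantitative point to be careful about: the claim that $W = (\partial^{(1)}-\partial)\zeta$ gains $p-1$ in the $\mathcal{S}(b,c)$-valuation is too optimistic. The factors $p^{\rho(\mathbf{j})}$ do contribute $\rho(\mathbf{j}) \ge p-1$ to the raw $\operatorname{ord}_p$ of the coefficients, but the terms $\mathbf{w}(\mathbf{j})$ raise the total weight $(n+1)r + w(\underline{j})$ by up to $\rho(\mathbf{j})+1$, which eats back $c(\rho(\mathbf{j})+1)$ under the weighted valuation; the net gain per step is thus only about $(1-c)\rho(\mathbf{j}) - c$ (e.g.\ $\ge 1/(p-1)$ when $b=1$). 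This is still strictly positive, which is all your bootstrap needs, so the conclusion stands, but the stated rate $p-1$ should be softened.
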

\begin{proof}
First we show that $\mathcal{R}(b,c)\cap \partial\mathcal{S}(b,c)=\{0\}.$
Let $\eta\in \mathcal{R}(b,c)\cap \partial\mathcal{S}(b,c)$ and  $\zeta_0\in \mathcal{S}(b,c)$ such that
$\partial(\zeta_0)=\eta$. If $\zeta_0=0$, then we have done. We consider the case $\zeta_0\neq 0$.
 Then there exists $\epsilon\in \mathbb{R}$ such that $\zeta_0\in \mathcal{S}(b,c;\epsilon)$ but $\zeta_0\notin \mathcal{S}(b,c;c+\epsilon)$.
From (\ref{eq527}), we conclude that
\begin{align*}
\partial(\zeta_0)=\partial^{(1)}(\zeta_0)+\xi_1
\end{align*}
with $\xi_1\in \mathcal{S}(b,c;\epsilon)$.
Then by Theorem \ref{thm3}, there exists $\eta_1\in \mathcal{R}(b,c;\epsilon)$ and $\zeta_1\in \mathcal{S}(b,c;\epsilon+c)$ such that
$$\xi_1=\eta_1+\partial(\zeta_1).$$
Applying (\ref{eq527}) to $\zeta_1$, we have that
\begin{align*}
\partial(\zeta_1)=\partial^{(1)}(\zeta_1)+\xi_2
\end{align*}
with $\xi_2\in \mathcal{S}(b,c;\epsilon+c)$.
 Repeat the process, then we get
$$\eta=\partial(\zeta_0)=\partial^{(1)}(\sum _{i\ge 0}\zeta_i)+ \sum _{j\ge 1}\eta_j$$
with $\zeta_i \in \mathcal{S}(b,c;\epsilon+ci)$ and $\eta_j \in \mathcal{R}(b,c;\epsilon+c(j-1))$.
One has that $\sum _{i\ge 0}\zeta_i\in \mathcal{S}(b,c;\epsilon)$ and $\sum _{j\ge 1}\eta_j\in \mathcal{R}(b,c;\epsilon)$ are well defined since the valuations in $\zeta_i$ and $\eta_j$ are increased by $c$.
By Lemma \ref{lem5}, we have
$\partial^{(1)}\mathcal{S}(b,c) \cap \mathcal{R}(b,c)=\{0\}.$
It follows that
$$\zeta_0=-\sum _{i\ge 1}\zeta_i$$
which contradicts the choice of $\epsilon$.
Hence $\zeta_0=0$. It follows that $H_{\kappa}^1(\mathcal{S}(b,c))\cong \mathcal{R}(b,c)$.

If we set $\eta=0$, one has ${\rm ker} \partial=0$. It follows that $H_{\kappa}^0(\mathcal{S}(b,c))=0$.

\end{proof}

\section{$p$-adic estimations}
From now on, we let $p>2$ and $\frac{1}{p-1}<b\leq 1$ and $c=b-\frac{1}{p-1}$. 
Let $q=p^a$ with $a\ge 0$. Recall that  $\bar{\alpha}_a$ is the Frobenius map from $ H^n(\mathcal{K}(b,b),D_{t})$ to $H^n(\mathcal{K}_q(b,b),D_{t^q})$. 
It follows  from Theorem \ref{thm2.1} and the definition of $\bar{\alpha}_a$ that $\bar{\alpha}_a(e_0)\equiv e_0 \mod (\gamma,t)$. 
Then we can linearly extend $\bar{\alpha}_a$ to  $\mathcal{S}(b,c)$ as follows. Define $[\bar{\alpha}_a]_{\infty,\kappa}:\mathcal{S}(b,c)\rightarrow \Omega[[t,e^{\underline{i}}]]$ by
$$[\bar{\alpha}_a]_{\infty,\kappa}(e_{\kappa}^{\underline{i}}):=\kappa^{\underline{i}}\bar{\alpha}_a(e_0)^{\kappa-|\underline{i}|}\bar{\alpha}_a(e_1)^{i_1}\cdots \bar{\alpha}_a(e_n)^{i_n}.$$
In what follows, we will show $[\bar{\alpha}_a]_{\infty,\kappa}$ is well defined.

Define $\mathcal{S}_{k,q}:={\rm Sym}^{k}H^n(\Omega^{\bullet}(\mathcal{K}_{q}(b,b),D_{t^q}))$ to be a $p$-adic space
over $\mathcal{L}_{q}(b)$ with basis
$\{e_{0}^{i_{0}}e^{\underline{i}}:=e_0^{i_0}e_1^{i_1}\cdots e_n^{i_n}\}$ such that $k=i_0+i_1+\cdots+i_n$.
Let $I_k$ be the set $\{(i_0,\underline{i})\in \mathbb{Z}^{n+1}:i_0+|\underline{i}|=k\}$.
Define the Frobenius map $Sym^k \bar{\alpha}_a:\mathcal{S}_{k,1}\rightarrow \mathcal{S}_{k,q}$ by
$$Sym^k \bar{\alpha}_a(e_{0}^{i_{0}}e^{\underline{i}}):=\bar{\alpha}_a(e_0)^{i_0}\bar{\alpha}_a(e_1)^{i_1}\cdots\bar{\alpha}_a(e_n)^{i_n}$$
with $k=i_0+i_1+\cdots+i_n$.
In particular, if we write
$$Sym^k \bar{\alpha}_1(e_{0}^{i_{0}}e^{\underline{i}})=\sum_{(j_0,\underline{j})\in I_k}\widetilde{A}^{(k)}((j_{0},\underline{j}),(i_{0},\underline{i}))e_{0}^{j_{0}}e^{\underline{j}} \mod D_{t^p},$$
then \cite[Theorem 4.1]{HS172} gives the $p$-adic estimate of $\widetilde{A}^{(k)}((j_{0},\underline{j}),(i_{0},\underline{i}))$.
\begin{lem}\label{lem3.2} For every $(i_0,\underline{i}),(j_0,\underline{j})\in I_k$, $\widetilde{A}^{(k)}((j_{0},\underline{j}),(i_{0},\underline{i}))\in \mathcal{L}(\frac{1}{p-1};w(\underline{j}))$.
\end{lem}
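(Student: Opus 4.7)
My approach is to leverage the multiplicative structure of the symmetric power Frobenius together with the sharp Frobenius estimates from Theorem \ref{thm2.1}, evaluated at the optimal endpoint $b=\frac{p}{p-1}$.

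First I would instantiate Theorem \ref{thm2.1} at $b=\frac{p}{p-1}$. With this choice one has $b-\frac{1}{p-1}=1$ while $\frac{1}{p-1}-\frac{b}{p}=0$, so the estimate collapses to $A(m,l)\in\mathcal{L}(\frac{1}{p-1};m)$ for every source index $l$. The critical feature is that the $\mathcal{L}$-shift now depends only on the target index $m$, which is precisely what the weight $w(\underline{j})$ measures.

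Next I would carry out a multinomial expansion. By the multiplicativity of $Sym^k\bar{\alpha}_1$ on the free module $\mathcal{S}_{k,p}$, modulo $D_{t^p}$ one has
\[
Sym^k\bar{\alpha}_1(e_0^{i_0}e^{\underline{i}})=\prod_{l=0}^n\Bigl(\sum_{m=0}^n A(m,l)\,e_m\Bigr)^{i_l}.
\]
Expanding each factor by the multinomial theorem and multiplying through, the coefficient of a fixed basis monomial $e_0^{j_0}\cdots e_n^{j_n}$ becomes a \emph{finite} sum, indexed by nonnegative integer matrices $(r_{m,l})_{0\le m,l\le n}$ satisfying the row-sum condition $\sum_{l}r_{m,l}=j_m$ and column-sum condition $\sum_{m}r_{m,l}=i_l$, of terms of the form
\[
\prod_{l=0}^n\binom{i_l}{r_{0,l},\ldots,r_{n,l}}\prod_{0\le m,l\le n}A(m,l)^{r_{m,l}}.
\]
The multinomial coefficients are ordinary integers, hence $p$-adically integral, so they introduce no loss in valuation.

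Finally I would invoke the standard multiplicativity of the $\mathcal{L}$-spaces: if $f_1\in\mathcal{L}(b;\rho_1)$ and $f_2\in\mathcal{L}(b;\rho_2)$, then $f_1f_2\in\mathcal{L}(b;\rho_1+\rho_2)$. Combined with the first step, this places $\prod_{m,l}A(m,l)^{r_{m,l}}$ in $\mathcal{L}(\tfrac{1}{p-1};\sum_{m,l}m\cdot r_{m,l})$. The row-sum condition immediately rewrites the total shift as $\sum_{m=0}^n m\cdot j_m=w(\underline{j})$, and since the outer summation over $(r_{m,l})$ is finite, the coefficient lies in $\mathcal{L}(\tfrac{1}{p-1};w(\underline{j}))$, as claimed. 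I do not anticipate a substantive obstacle beyond two routine checks: that $b=\frac{p}{p-1}$ is admissible in Theorem \ref{thm2.1}, and that the multinomial expansion above can be performed inside $\mathcal{S}_{k,p}$ itself (so that no residual $D_{t^p}$-correction appears); both follow from the definition of $\mathcal{S}_{k,p}$ as the $k$-th symmetric power of the free $\mathcal{L}_p(b)$-module $H^n(\mathcal{K}_p(b,b),D_{t^p})$ with basis $\{e_0,\ldots,e_n\}$.
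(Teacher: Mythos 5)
Your proof is correct, and it is essentially the argument that the paper delegates to its citation of Haessig--Sperber: the paper itself gives no proof beyond invoking \cite[Theorem 4.1]{HS172}, which is proved there by exactly this kind of multinomial expansion of $\mathrm{Sym}^k$ of the rank-one Frobenius estimate. Your instantiation of Theorem \ref{thm2.1} at $b=\frac{p}{p-1}$ is the right move: it makes $b/p=\frac{1}{p-1}$ (so the $\mathcal L$-parameter matches the target space), kills the $(\frac{1}{p-1}-\frac{b}{p})i$ term, and leaves a shift of exactly $j$ per factor $A(j,l)$; the row-sum condition $\sum_l r_{m,l}=j_m$ then aggregates these to $\sum_{m=0}^n m\,j_m=w(\underline{j})$ (the $m=0$ term contributes nothing, consistent with $w$ being defined on $\underline{j}=(j_1,\dots,j_n)$). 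The two routine checks you flag are indeed non-issues: $b=\frac{p}{p-1}$ is the right endpoint allowed in Theorem \ref{thm2.1}, and $\mathcal S_{k,p}=\mathrm{Sym}^k H^n(\mathcal K_p(b,b),D_{t^p})$ is free over $\mathcal L_p(b)=\mathcal L(b/p)$ on the degree-$k$ monomials in $e_0,\dots,e_n$, so the expansion is intrinsic and no residual $D_{t^p}$ correction arises. The only thing worth stating explicitly is that the outer sum over matrices $(r_{m,l})$ is finite, so no convergence issue arises in passing from the single products to the full coefficient — you do note this. In short: correct, and it fills in a proof the paper only cites.
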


Now let $a=0$ and write
$$[\bar{\alpha}_1]_{\infty,\kappa}(e^{\underline{i}}_{\kappa})=\sum_{\underline{j}\in\mathbb{Z}_{\ge0}^n} A^{\infty,\kappa}(\underline{j},\underline{i})e^{\underline{j}}_{\kappa}.$$
%for each $\underline{i}\in \mathbb{Z}_{\ge 0}^n$,
%where $A^{\infty,\kappa}(\underline{j},\underline{i})\in \mathcal{O}_p$.
\begin{lem}\label{lem3.3}
 For $\kappa \in \mathbb{Z}_p\setminus \mathbb{Z}_{\ge 0}$, we have that $[\bar{\alpha}_1]_{\infty,\kappa}:\mathcal{S}(b,c)\rightarrow \mathcal{S}(b/p,c)$ is well-defined.
\end{lem}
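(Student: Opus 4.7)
The task is to verify two things: (i) for each fixed $\underline{i}$, the formal expression $\kappa^{\underline{i}} \bar{\alpha}_1(e_0)^{\kappa - |\underline{i}|} \prod_l \bar{\alpha}_1(e_l)^{i_l}$ converges in the infinite symmetric power setup to a well-defined series $\sum_{\underline{j}} A^{\infty,\kappa}(\underline{j}, \underline{i}) e_\kappa^{\underline{j}}$ with $A^{\infty,\kappa}(\underline{j}, \underline{i}) \in \mathcal{L}(b/p)$; and (ii) the coefficient-wise extension carries $\mathcal{S}(b,c)$ into $\mathcal{S}(b/p, c)$. The main tools are the Frobenius estimates of Theorem \ref{thm2.1} and the $p$-adic binomial series for powers of $1$-units.

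For (i), I would first make sense of $\bar{\alpha}_1(e_0)^{\kappa - |\underline{i}|}$. By Theorem \ref{thm2.1}, $\bar{\alpha}_1(e_0) = \sum_{j=0}^n A(j,0) e_j$ with $A(0,0) \equiv 1 \pmod{(\gamma, t)}$ and $A(j,0) \in \mathcal{L}(b/p;\, cj)$. Factor formally
\[
\bar{\alpha}_1(e_0) = A(0,0)\, e_0 (1 + \xi), \qquad \xi := \sum_{j \geq 1} \frac{A(j,0)}{A(0,0)} \cdot \frac{e_j}{e_0},
\]
where $e_j/e_0$ is treated symbolically. Since $A(0,0)$ is a $p$-adic $1$-unit and each coefficient of $\xi$ has valuation $\geq c > 0$, both $A(0,0)^{\kappa - |\underline{i}|}$ and $(1 + \xi)^{\kappa - |\underline{i}|} = \sum_{m \geq 0} \binom{\kappa - |\underline{i}|}{m} \xi^m$ are defined via the binomial series, with $\binom{\kappa - |\underline{i}|}{m} \in \mathbb{Z}_p$. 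Multiplying by $\prod_l \bar{\alpha}_1(e_l)^{i_l}$ and collecting terms, each resulting monomial has total $e$-degree $\kappa$, hence takes the form $C \cdot e_0^{\kappa - |\underline{j}|} e_1^{j_1} \cdots e_n^{j_n}$; identifying this with $C \cdot e_\kappa^{\underline{j}} / \kappa^{\underline{j}}$ reads off $A^{\infty,\kappa}(\underline{j}, \underline{i})$. A count of $e$-indices shows only $m \leq |\underline{j}|$ contributes for fixed $\underline{j}$, so $A^{\infty,\kappa}(\underline{j}, \underline{i})$ is a finite sum of elements of $\mathcal{L}(b/p)$.

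For (ii), I would establish the uniform growth bound
\[
A^{\infty,\kappa}(\underline{j}, \underline{i}) \in \mathcal{L}\bigl(b/p;\, c\, w(\underline{j}) + (\tfrac{1}{p-1} - \tfrac{b}{p})\, w(\underline{i})\bigr).
\]
Each contribution is a product of binomial/multinomial factors (with valuation $\geq 0$), the unit $A(0,0)^{\kappa - |\underline{i}| - m}$, and various $A(k,l)$'s which by Theorem \ref{thm2.1} contribute offset $ck + (\tfrac{1}{p-1} - \tfrac{b}{p}) l$. A combinatorial bookkeeping shows that across all $A$-factors selected, the ``$k$'' values sum to $w(\underline{j})$ (forced by matching the output monomial) and the ``$l$'' values sum to $w(\underline{i})$ (from the $\bar{\alpha}_1(e_l)^{i_l}$ factors, where $\xi^m$ only contributes $l = 0$). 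Under the Section 4 hypotheses $p > 2$ and $b \leq 1$, $\tfrac{1}{p-1} - \tfrac{b}{p} \geq 0$, so the bound is at worst $\mathcal{L}(b/p;\, c\, w(\underline{j}))$ uniformly in $\underline{i}$. Given $\zeta = \sum a(r, \underline{i}) t^r e_\kappa^{\underline{i}} \in \mathcal{S}(b, c; \rho)$, the $(r', \underline{j})$-coefficient of $[\bar{\alpha}_1]_{\infty,\kappa}(\zeta)$ has valuation at least
\[
b(n+1) r + c\, w(\underline{i}) + \rho + \tfrac{b(n+1)(r' - r)}{p} + c\, w(\underline{j}) \geq \tfrac{b(n+1) r'}{p} + c\, w(\underline{j}) + \rho,
\]
using $b(n+1) r \geq b(n+1) r/p$; the sum over $\underline{i}$ converges because $a(r, \underline{i}) \to 0$ as $w(\underline{i}) \to \infty$ while $A^{\infty,\kappa}(\underline{j}, \underline{i})$ is bounded below uniformly in $\underline{i}$. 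Hence $[\bar{\alpha}_1]_{\infty,\kappa}(\zeta) \in \mathcal{S}(b/p, c; \rho)$.

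The main obstacle I anticipate is the combinatorial verification that, in every monomial contribution, the chosen $k$-values sum to $w(\underline{j})$ and the $l$-values sum to $w(\underline{i})$, together with the careful identification of the formal symbol $e_0^{\kappa - |\underline{j}|} e_1^{j_1} \cdots e_n^{j_n}$ with $e_\kappa^{\underline{j}}/\kappa^{\underline{j}}$ in the infinite symmetric power; once this formal setup is in place, the growth estimate and convergence follow mechanically from Theorem \ref{thm2.1}.
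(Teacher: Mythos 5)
Your overall plan — binomial-expand the $1$-unit $\bar\alpha_1(e_0)$, use the Frobenius estimates of Theorem \ref{thm2.1}, and track weights through the symmetric power — is the same as the paper's, and your factorization $\bar\alpha_1(e_0)=A(0,0)(1+\xi)$ is a clean variant of the paper's $1+\sum\widetilde{A}(j,0)e_j$. However, step (ii) has a genuine gap: when you read off $A^{\infty,\kappa}(\underline{j},\underline{i})$ by identifying $e_0^{\kappa-|\underline{j}|}e^{\underline{j}}$ with $e_\kappa^{\underline{j}}/\kappa^{\underline{j}}$ and including the $\kappa^{\underline{i}}$ from $e_\kappa^{\underline{i}}$, you pick up the factor $\kappa^{\underline{i}}/\kappa^{\underline{j}}$. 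For $|\underline{j}|>|\underline{i}|$ this is $1/\big((\kappa-|\underline{i}|)\cdots(\kappa-|\underline{j}|+1)\big)$, which is \emph{not} one of your ``binomial/multinomial factors with valuation $\geq 0$'' — it can have arbitrarily negative valuation (e.g.\ if $p$ divides $\kappa-|\underline{i}|$ to high order). The bad behaviour is only tamed after combining with $\binom{\kappa-|\underline{i}|}{m}$ from the series: concretely, with $\underline{i}=\underline{0}$ and $\underline{j}=(m,0,\ldots,0)$ the contribution is exactly $\tfrac{1}{m!}A(0,0)^{\kappa-m}A(1,0)^m$, showing a denominator $m!$ and hence a valuation loss of up to $\tfrac{m}{p-1}$. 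This is precisely what the paper's combinatorial identity
\[
\frac{\kappa^{\underline{i}}}{\kappa^{\underline{\mu}}}\binom{\kappa-|\underline{i}|}{l}\binom{l}{l_0,\cdots,l_n}
=\frac{j_0!}{(l-l_0)!}\binom{\kappa-l-|\underline{i}|+j_0+l_0}{j_0+l_0}\binom{j_0+l_0}{l_0}\binom{l-l_0}{l_1,\cdots,l_n}
\]
controls: the combined factor lies in $\frac{1}{(l-l_0)!}\mathbb{Z}_p$, costing at most $\frac{l-l_0}{p-1}$.

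This loss is fatal to your claimed bound $A^{\infty,\kappa}(\underline{j},\underline{i})\in\mathcal{L}(b/p;\,cw(\underline{j})+\ldots)$ if, as you propose, you estimate the $A(k,l)$'s using Theorem \ref{thm2.1} at the working $b$, since that only yields offset $ck$ per factor; then the net offset is $cw(\underline{j})-\tfrac{m}{p-1}$, which drops below $cw(\underline{j})$ (and even below $0$ when $c$ is close to its lower limit). The paper instead invokes Lemma \ref{lem3.2}, i.e.\ the endpoint $b=\tfrac{p}{p-1}$ version giving $\widetilde{A}(j,0)\in\mathcal{L}(\tfrac{1}{p-1};j)$ and $\widetilde{A}^{(k)}\in\mathcal{L}(\tfrac{1}{p-1};w(\underline{j}))$, so that the $A$-factors contribute a full $w(\underline{\mu})$; since $l-l_0\leq w(\underline{\mu})$, the net is $w(\underline{\mu})-\tfrac{l-l_0}{p-1}\geq(1-\tfrac{1}{p-1})w(\underline{\mu})\geq cw(\underline{\mu})$, which lands in $\mathcal{S}(b/p,c)$. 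Your write-up is missing both the explicit treatment of $\kappa^{\underline{i}}/\kappa^{\underline{j}}$ and the switch to this sharper Frobenius estimate that makes the cancellation close.
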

\begin{proof}
From Theorem \ref{thm2.1}, we may write
$$\bar{\alpha}_{1}(e_0)=1+\sum_{j=0}^{n}\widetilde{A}(j,0)e_{j},$$
where $\widetilde{A}(0,0)=A(0,0)-1$ and $\widetilde{A}(j,0)=A(j,0)\in\mathcal{L}(\frac{1}{p-1};j)$ for $j\ge 1$.
Then ${\rm ord}_p\widetilde{A}(0,0)\ge\frac{1}{p-1}$.
We have
\begin{align*}
\bar{\alpha}(1)^{\kappa}&=\sum_{l=0}^{\infty}\binom{\kappa}{l}(\sum_{j=0}^{n}\widetilde{A}(j,0)e_{j})^{l} \\
&=\sum_{l=0}^{\infty}\binom{\kappa}{l}\sum_{\begin{subarray}{c} l_0+l_{1}+\cdots+l_{n}=l \\ l_0,l_{1},\cdots,l_{n}\geq 0
\end{subarray}}\binom{l}{l_{0},l_{1},\cdots,l_{n}}\prod_{j=0}^{n}\widetilde{A}(j,0)^{l_{j}}e_{j}^{l_{j}}.
\end{align*}
%The $p$-dic order of the coefficient of $e_{j}^{l_{j}}$ is bigger than $\frac{l_0}{p-1}+\sum_{j=1}^njl_j$.  Hence $\alpha_{1}(1)^{\kappa}$ is a well-defined.
%In what follows, we show $[\bar{\alpha}_1]_{\infty,\kappa}(e^{\underline{i}})\in \mathcal{S}_{\infty,\kappa,p}$.
Then we have that
\begin{align}\label{eq41}
[\bar{\alpha}_1]_{\infty,\kappa}(e^{\underline{i}}_{\kappa})&=\kappa^{\underline{i}}\bar{\alpha}(1)^{\kappa-|\underline{i}|}\prod_{s=1}^{n}\bar{\alpha}_1(e_{s})^{i_{s}}\nonumber\\
=&\kappa^{\underline{i}}\sum_{l=0}^{\infty}\binom{\kappa-|\underline{i}|}{l}\sum_{\begin{subarray}{c} l_{0}+\cdots+l_{n}=l \\ l_{0},\cdots,l_{n}\geq 0
\end{subarray}}\binom{l}{l_{0},\cdots,l_{n}}\prod_{j=0}^{n}\widetilde{A}(j,0)^{l_{j}}e_{j}^{l_{j}}\sum_{(j_0,\underline{j})\in I_{|\underline{i}|}}\widetilde{A}^{(|\underline{i}|)}(\underline{j},\underline{i})e^{\underline{j}}\nonumber\\
=&\kappa^{\underline{i}}\sum_{l=0}^{\infty}\sum_{\begin{subarray}{c} l_{0}+\cdots+l_{n}=l \\ l_{0},\cdots,l_{n}\geq 0
\end{subarray}}\sum_{(j_{0},\underline{j})\in I_{|\underline{i}|}} 
\Big(\binom{\kappa-|\underline{i}|}{l}\binom{l}{l_{0},\cdots,l_{n}}
\widetilde{A}^{(|\underline{i}|)}((j_{0},\underline{j}),(i_{0},\underline{i}))\nonumber\\
&\prod_{j=0}^{n}\widetilde{A}(j,0)^{l_{j}}e^{\underline{j}+(l_{1},\cdot\cdot\cdot,l_{n})}\Big)\nonumber\\
=&\sum_{\underline{\mu}\in\mathbb{Z}_{\geq 0}^{n}}A^{\infty,\kappa}(\underline{\mu},\underline{i})e^{\underline{\mu}}_{\kappa},
\end{align}
where
\begin{equation*}
\begin{aligned}
&A^{\infty,\kappa}(\underline{\mu},\underline{i})\\
&=\sum_{l=0}^{\infty}\sum_{\begin{subarray}{c} l_{0}+\cdots+l_{n}=l \\ l_{0},\cdots,l_{n}\geq 0
\end{subarray}}\sum_{\begin{subarray}{c}
 \underline{\mu}=\underline{j}+(l_{1},\cdot\cdot\cdot,l_{n})
\end{subarray}}\frac{\kappa^{\underline{i}}}{\kappa^{\underline{\mu}}}
\binom{\kappa-|\underline{i}|}{l}\binom{l}{l_{0},\cdots,l_{n}}\widetilde{A}^{(|\underline{i}|)}((j_{0},\underline{j}),(i_{0},\underline{i}))\prod_{j=0}^{n}\widetilde{A}(j,0)^{l_{j}}.    
\end{aligned}    
\end{equation*}
 Note that $|\underline{\mu}|=|\underline{j}|+l_{1}+\cdots+l_{n}=l+|\underline{i}|-j_{0}-l_{0}$.  We have $$\frac{\kappa^{\underline{i}}}{\kappa^{\underline{\mu}}}\binom{\kappa-|\underline{i}|}{l}\binom{l}{l_{0},\cdots,l_{n}}=\frac{j_{0}!}{(l-l_{0})!}\binom{\kappa-l-|\underline{i}|+j_{0}+l_{0}}{j_{0}+l_{0}}\binom{j_{0}+l_{0}}{l_{0}}\binom{l-l_{0}}{l_{1},\cdots,l_{n}}.$$ Kummer's theorem shows that this combinatorial coefficient lies in $\frac{1}{(l-l_{0})!}\mathbb{Z}_{p}$. Hence
 $$\ord_{p}\frac{\kappa^{\underline{i}}}{\kappa^{\underline{\mu}}}\binom{\kappa-|\underline{i}|}{l}\binom{l}{l_{0},\cdots,l_{n}}\geq -\frac{l-l_{0}}{p-1}.$$
 Applying the estimates of Lemma \ref{lem3.2} that $\widetilde{A}^{(|\underline{i}|)}((j_{0},\underline{j}),(i_{0},\underline{i}))\in \mathcal{L}(1/(p-1);w(\underline{j}))$ and $\widetilde{A}(j,0)^{l_{j}}\in\mathcal{L}(1/(p-1);jl_j) $.
Also note that $\omega(\underline{\mu})=\omega(\underline{j})+\sum_{j=1}^{n}jl_{j}$. Then we easily obtain
\begin{equation}\label{eq42}
A^{\infty,\kappa}(\underline{\mu},\underline{i})\in\mathcal{L}(\frac{1}{p-1};\omega(\underline{\mu})-\frac{l}{p-1}+\frac{l_0}{p-1}).
\end{equation}
Note that $c=b-\frac{1}{p-1}\le 1-\frac{1}{p-1}$. Hence
$\omega(\underline{\mu})-\frac{l}{p-1}+\frac{l_0}{p-1}\ge (1-\frac{1}{p-1})\omega(\mu)$.
It follows that
$\sum A^{\infty,\kappa}(\underline{\mu},\underline{i})e^{\underline{\mu}} \in \mathcal{S}(b/p,c)$.
\end{proof}

Define map $\psi_{t}:\mathcal{S}(b/p,c)\to\mathcal{S}(b,c)$ by
$$\psi_{t}: \sum a(r;\underline{i})t^re^{\underline{i}}_{\kappa}\mapsto \sum a(pr;\underline{i})t^re^{\underline{i}}_{\kappa}.$$
Then we define $\beta_{\infty,\kappa,1}:\mathcal{S}(b,c)\rightarrow\mathcal{S}(b,c)$ by $\beta_{\infty,\kappa,1}:=\psi_{t}\circ [\bar{\alpha}_1]_{\infty,\kappa}$ and
$\beta_{\infty,\kappa}:=\psi_{t}^a\circ [\bar{\alpha}_a]_{\infty,\kappa}.$
It has been proved in \cite[Section 3.2]{YZ23}
that $\beta_{\infty,\kappa}$ is completely continuous and
$$L( Sym^{\kappa,\infty}{\rm Kl}_n,T)=\det(1-\beta_{\infty,\kappa}T|\mathcal{S}_{\infty,\kappa})^{\delta_q},$$
where $g(T)^{\delta_q}=g(T)/g(qT).$

By an analogous argument as \cite[Lemma 3.7]{H17}, we have that $q\partial\circ \beta_{\infty,\kappa}=\beta_{\infty,\kappa}\circ \partial$. Then $\beta_{\infty,\kappa}$ induces a map $\bar{\beta}_{\infty,\kappa}$
on $H^1(\mathcal{S}(b,c),\partial)$
 and $H^0(\mathcal{S}(b,c),\partial)$.
Then we have
$$L( Sym^{\kappa,\infty}{\rm Kl}_n,T)=\frac{\det(1-\bar{\beta}_{\infty,\kappa}T|H^{1}(\mathcal{S}(b,c),\partial))}
{\det(1-\bar{\beta}_{\infty,\kappa}qT|H^{0}(\mathcal{S}(b,c),\partial))}.$$
It follows from Theorem \ref{thm11} that $H^{0}(\mathcal{S}(b,c),\partial)=0$.
Hence
$$L( Sym^{\kappa,\infty}{\rm Kl}_n,T)=\det(1-\bar{\beta}_{\infty,\kappa}T|H^{1}(\mathcal{S}(b,c),\partial)).$$

 By the same argument as \cite[Theorem 4.9]{H17} we have $\bar{\beta}_{\infty,\kappa,1}^a=\bar{\beta}_{\infty,\kappa}$. We first give the $p$-adic
 estimation for $\bar{\beta}_{\infty,\kappa,1}$.
For each $(r,\underline{i}),(s,\underline{j})\in \mathcal{B}_1$, where $$\mathcal{B}_{1}=\{(r,\underline{i})|r\in\mathbb{Z}_{\geq 0}, \underline{i}=(0,i_{2},i_{3},\cdots)\in\mathbb{Z}_{\geq 0}^{n}\},$$
$\{\gamma^{(n+1)r}t^{r}e^{\underline{i}}_{\kappa}\}_{(r,\underline{i})\in\mathcal{B}_{1}}$ is a basis of $H^{1}(S(b,c),\partial)$. For this basis, there exists $B^{\infty,\kappa}((s,\underline{j});(r,\underline{i}))\in\Omega$ such that
$$\bar{\beta}_{\infty,\kappa,1}(\gamma^{(n+1)r}t^re^{\underline{i}}_{\kappa})=\sum_{(s,\underline{j})\in \mathcal{B}_1} B^{\infty,\kappa}((s,\underline{j});(r,\underline{i}))\gamma^{(n+1)s}t^se^{\underline{j}}_{\kappa}.$$

Let $b=1$ and $c=1-1/(p-1)$. Then we have the following estimation.

\begin{thm}\label{thm63}
For $(r,\underline{i}),(s,\underline{j})\in \mathcal{B}_1$, we have ${\rm ord}_{p}B^{\infty,\kappa}((s,\underline{j});(r,\underline{i}))\geq (1-\frac{1}{p-1})W(s;\underline{j})$.
\end{thm}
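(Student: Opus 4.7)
The plan is to combine the Frobenius estimate of Lemma~\ref{lem3.3} with the Frobenius twist on $t$ and the contraction $\psi_t$ built into $\beta_{\infty,\kappa,1}$, and then project the resulting expression onto the basis $\{\gamma^{(n+1)s}t^se_\kappa^{\underline{j}}\}_{(s,\underline{j})\in\mathcal{B}_1}$ of $\mathcal{R}(b,c)\cong H^1(\mathcal{S}(b,c),\partial)$ via Theorem~\ref{thm3} and Theorem~\ref{thm11}, without losing any valuation. Throughout I take $b=1$ and $c=1-\tfrac{1}{p-1}$.

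First I would apply $\beta_{\infty,\kappa,1}=\psi_t\circ[\bar{\alpha}_1]_{\infty,\kappa}$ to the basis element $\gamma^{(n+1)r}t^re_\kappa^{\underline{i}}$ and expand monomially in the $e_\kappa^{\underline{\mu}}$. By estimate \eqref{eq42}, the coefficient of $e_\kappa^{\underline{\mu}}$ in $[\bar{\alpha}_1]_{\infty,\kappa}(e_\kappa^{\underline{i}})$ lies in $\mathcal{L}(\tfrac{1}{p-1};\omega(\underline{\mu})-\tfrac{l-l_0}{p-1})$, which, since $c\leq 1-\tfrac{1}{p-1}$, is contained in $\mathcal{L}(\tfrac{1}{p-1};c\,\omega(\underline{\mu}))$. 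Multiplying by $\gamma^{(n+1)r}$ (which contributes $(n+1)r/(p-1)$ to the valuation) and applying $\psi_t$ to the Frobenius-twisted $t$-power should give
\[
\beta_{\infty,\kappa,1}(\gamma^{(n+1)r}t^re_\kappa^{\underline{i}})=\sum_{s,\underline{\mu}}c_{s,\underline{\mu}}(r,\underline{i})\,\gamma^{(n+1)s}t^se_\kappa^{\underline{\mu}}\in\mathcal{S}(b,c;0),
\]
with $\mathrm{ord}_p\,c_{s,\underline{\mu}}(r,\underline{i})+(n+1)s/(p-1)\geq b(n+1)s+c\,\omega(\underline{\mu})$ for every $(s,\underline{\mu})$.

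Next, by Theorem~\ref{thm3} I would write this image as $\eta+\partial\epsilon$ with $\eta\in\mathcal{R}(b,c)$ and $\epsilon\in\mathcal{S}(b,c;c)$; the iterative construction in the proof of Theorem~\ref{thm3}, together with the filtration-preservation in Remark~\ref{remark}, forces $\eta\in\mathcal{R}(b,c;0)$. Expanding
\[
\eta=\sum_{(s,\underline{j})\in\mathcal{B}_1}B^{\infty,\kappa}((s,\underline{j});(r,\underline{i}))\,\gamma^{(n+1)s}t^se_\kappa^{\underline{j}}
\]
and applying the defining inequality of $\mathcal{S}(b,c;0)$ monomial by monomial gives $\mathrm{ord}_p\,B^{\infty,\kappa}((s,\underline{j});(r,\underline{i}))+(n+1)s/(p-1)\geq b(n+1)s+c\,w(\underline{j})$, that is, $\mathrm{ord}_p\,B^{\infty,\kappa}((s,\underline{j});(r,\underline{i}))\geq c((n+1)s+w(\underline{j}))=(1-\tfrac{1}{p-1})W(s;\underline{j})$, as required.

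The main technical obstacle is the bookkeeping in the first step: four valuation contributions---the $\gamma^{(n+1)r}$-factor, the Banach estimate \eqref{eq42}, the $\psi_t$-contraction of the Frobenius-twisted $t$-power, and the weight $c\,\omega(\underline{\mu})$ attached to $e_\kappa^{\underline{\mu}}$---must balance so that the image lies precisely in $\mathcal{S}(b,c;0)$ with $b=1$. Once this balance is verified the rest is purely cohomological and reduces to the splitting statements already established in Section~3.
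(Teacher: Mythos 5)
Your proposal is correct and follows the paper's proof almost step for step: expand $\beta_{\infty,\kappa,1}(\gamma^{(n+1)r}t^r e_\kappa^{\underline{i}})$ via the estimate \eqref{eq42}, track the $\gamma^{(n+1)r}$-factor and the $\psi_t$-contraction to place the image in $\mathcal{S}(1,1-\tfrac{1}{p-1};0)$, and then project onto $\mathcal{R}(1,1-\tfrac{1}{p-1};0)$ via Theorems~\ref{thm3} and \ref{thm11} before reading off the estimate from the defining inequality of $\mathcal{R}$. The one small imprecision is citing Remark~\ref{remark} (which concerns $\partial^{(1)}$) for the valuation-preserving splitting; the correct reference is Theorem~\ref{thm3} itself, which already states the splitting for $\partial$ at the level of $\mathcal{S}(b,c;0)$, together with the uniqueness from Theorem~\ref{thm11}.
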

\begin{proof}
By the definition and (\ref{eq41}), we write 
\begin{align*}
\beta_{\infty,\kappa,1}(\gamma^{(n+1)r}t^{r}e^{\underline{i}}_{\kappa})%&%=\psi_{t}\circ(\gamma^{(n+1)r}t^{r}\cdot [\alpha]_{\infty,\kappa}(e^{\underline{i}})) \\
&=\psi_{t}(\sum_{\underline{i}'\in\mathbb{Z}_{\geq 0}^{n}}\gamma^{(n+1)r}t^{r}A^{\infty,\kappa}(\underline{i}',\underline{i})e^{\underline{i}'}_{\kappa}).
\end{align*}
Using (\ref{eq42}), we may write $A^{\infty,\kappa}(\underline{i}',\underline{i})=\sum_{l\geq 0}A^{\infty,\kappa}(\underline{i}',\underline{i};l)t^{l}$ with
$${\rm ord}_{p}A^{\infty,\kappa}(\underline{i}',\underline{i};l)\geq\frac{(n+1)l}{p-1}+(1-\frac{1}{p-1})\omega(\underline{i}').$$
Then
\begin{align*}
\beta_{\infty,\kappa,1}(\gamma^{(n+1)r}t^{r}e^{\underline{i}}_{\kappa})&=\psi_{t}(\sum_{\underline{i}'\in\mathbb{Z}_{\geq 0}^{n},\ l\geq 0}\gamma^{(n+1)r}A^{\infty,\kappa}(\underline{i}',\underline{i};l)t^{l+r}e^{\underline{i}'}_{\kappa}) \\
&=\sum_{\underline{i}'\in\mathbb{Z}_{\geq 0}^{n},\ l\geq r}\gamma^{(n+1)(r-l)}A^{\infty,\kappa}(\underline{i}',\underline{i};pl-r)\gamma^{(n+1)l}t^{l}e^{\underline{i}'}_{\kappa}
\end{align*}
Then one has that $\beta_{\infty,\kappa,1}(\gamma^{(n+1)r}t^{r}e^{\underline{i}}_{\kappa})\in \mathcal{S}(1,1-\frac{1}{p-1};0)$.
Then by Theorem \ref{thm11} we have that
$$\bar{\beta}_{\infty,\kappa,1}(\gamma^{(n+1)r}t^{r}e^{\underline{i}}_{\kappa})=\sum_{(s,\underline{j})\in\mathcal{B}_{1}} B^{(r,\underline{i})}(s,\underline{j}) t^s e^{\underline{j}}_{\kappa}\in \mathcal{R}(1,1-\frac{1}{p-1};0).$$
Hence $B^{\infty,\kappa}((s,\underline{j});(r,\underline{i}))=\sum_{(s,\underline{j})\in\mathcal{B}_{1}} B^{(r,\underline{i})}(s,\underline{j})\gamma^{-(n+1)s}$ with
$${\rm ord}_p(B^{(r,\underline{i})}(s,\underline{j})\gamma^{-(n+1)s} )\ge (n+1)s+(1-\frac{1}{p-1})w(\underline{j})-\frac{(n+1)s}{p-1}=(1-\frac{1}{p-1})W(s;\underline{j}).$$

%Since $\gamma^{(n+1)l}t^{l}e^{\underline{i}'}\in Fil^{W(l,\underline{i}')}\mathcal{S}_{\infty,\kappa}$, by Theorem \ref{thm11} we have
%$$\gamma^{(n+1)l}t^{l}e^{\underline{i}'}=\sum_{s\ge 0,\underline{j}=(0,j_1,\cdots,j_n)}C((s,\underline{j});(l,\underline{i}'))\gamma^{(n+1)s}t^{s}e^{\underline{j}}\ \mathrm{mod}\partial $$
%with ${\rm ord}_{p}C((s,\underline{j});(l,\underline{i}'))\geq W(s,\underline{j})-W(l,\underline{i}')$.
%Then
%\begin{equation*}
%B^{\infty,\kappa}((s,\underline{j});(r,\underline{i}))=\sum_{\underline{i}'\in\mathbb{Z}_{\geq 0}^{n},\ l\geq r}\sum_{(s,\underline{j})\in \mathcal{B}_1}\gamma^{\frac{(n+1)(p-1)r}{p}}A^{\infty,\kappa}(\underline{i}',\underline{i};pl-r)C(s,\underline{j};l,\underline{i}')
%\end{equation*}
%For each summand in above expression we have
%\begingroup
%\allowdisplaybreaks
%\begin{align*}
%{\rm ord}_{p}&\gamma^{\frac{(n+1)(p-1)r}{p}}A^{\infty,\kappa}(\underline{i}',\underline{i};pl-r)C(s,\underline{j};l,\underline{i}') \\ &\geq\frac{(n+1)r}{p}+\frac{(n+1)(pl-r)}{p}+\omega(\underline{i}')+W(s;\underline{j})-W(l;\underline{i}')\\
%&=W(s;\underline{j}).
%\end{align*}
%\endgroup
\end{proof}

\begin{thm}\label{thm632}
 If we write 
 $$R(T)/(1-T^{n+1})=\sum_{i=0}^{\infty}h_i(n)T^i,$$
then the $q$-adic Newton polygon of $L(\kappa,T)$ lies on or above the $q$-adic Newton polygon of $$\prod(1-q^{(1-\frac{1}{p-1})i}T)^{h_i(n)}.$$ \end{thm}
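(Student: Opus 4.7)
The plan is to assemble the three ingredients already in place: the cohomological identity $L(Sym^{\kappa,\infty} {\rm Kl}_n, T) = \det(1 - \bar{\beta}_{\infty,\kappa} T \mid H^1(\mathcal{S}(b,c), \partial))$ together with the relation $\bar{\beta}_{\infty,\kappa} = \bar{\beta}_{\infty,\kappa,1}^a$ from the previous section; the basis $\{\gamma^{(n+1)r} t^r e_\kappa^{\underline{i}}\}_{(r,\underline{i}) \in \mathcal{B}_1}$ of $H^1(\mathcal{S}(b,c), \partial) \cong \mathcal{R}(b,c)$; and Theorem \ref{thm63}, which provides the uniform \emph{row} estimate ${\rm ord}_p B^{\infty,\kappa}((s,\underline{j});(r,\underline{i})) \ge (1 - \frac{1}{p-1}) W(s;\underline{j})$, a bound depending only on the row index.

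First I would carry out the combinatorial identification. For $(r, \underline{i}) \in \mathcal{B}_1$ (so $\underline{i} = (0, i_2, \dots, i_n)$), the weight is $W(r; \underline{i}) = (n+1)r + \sum_{j=2}^n j i_j$, and a direct generating-function calculation gives
$$\sum_{(r,\underline{i}) \in \mathcal{B}_1} T^{W(r;\underline{i})} = \frac{1}{1-T^{n+1}} \sum_{(i_2,\dots,i_n) \in \mathbb{Z}_{\ge 0}^{n-1}} T^{\sum_{j=2}^n j i_j} = \frac{R(T)}{1 - T^{n+1}} = \sum_{i \ge 0} h_i(n) T^i.$$
Hence $h_i(n)$ is precisely the number of basis vectors of weight $i$, and for each $N$ there are exactly $\sum_{i=0}^N h_i(n)$ basis vectors of weight $\le N$.

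Next I would enumerate the basis $\{v_k\}_{k \ge 1}$ in non-decreasing order of weight $W_k$ and write $M$ for the matrix of $\bar{\beta}_{\infty,\kappa,1}$ in this ordering. Theorem \ref{thm63} guarantees that every entry of the $k$-th row of $M$ has $p$-adic valuation at least $(1 - \frac{1}{p-1}) W_k$. Writing $\det(1 - \bar{\beta}_{\infty,\kappa,1} T) = \sum_k c_k T^k$, each coefficient $c_k$ is an alternating sum of principal $k \times k$ minors, and a standard $p$-adic Hadamard argument yields $\mathrm{ord}_p c_k \ge (1 - \frac{1}{p-1}) \sum_{j=1}^k W_j$, since the minimum sum of $k$ row bounds is realised by the $k$ smallest weights. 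Combined with the counting step, this forces the $p$-adic Newton polygon of $\det(1 - \bar{\beta}_{\infty,\kappa,1} T)$ to lie on or above the polygon through the vertices $\bigl(\sum_{i=0}^N h_i(n), (1 - \frac{1}{p-1}) \sum_{i=0}^N i\, h_i(n)\bigr)$.

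Finally I would pass to $q$-adic slopes of $L(\kappa,T)$. Since $\bar{\beta}_{\infty,\kappa} = \bar{\beta}_{\infty,\kappa,1}^a$, each Fredholm eigenvalue $\lambda$ of $\bar{\beta}_{\infty,\kappa,1}$ contributes $\lambda^a$ to $L(\kappa,T)$, and $\mathrm{ord}_q(\lambda^a) = \mathrm{ord}_p(\lambda)$, so the $q$-adic slopes of $L(\kappa,T)$ coincide with the $p$-adic slopes of $\det(1 - \bar{\beta}_{\infty,\kappa,1} T)$ just bounded. The resulting polygon has slope $(1 - \frac{1}{p-1}) i$ with multiplicity $h_i(n)$, which is exactly the $q$-adic Newton polygon of $\prod_i (1 - q^{(1 - \frac{1}{p-1}) i} T)^{h_i(n)}$. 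I expect no genuine obstacle in this plan, since the substantive analytic work is all packaged in Theorem \ref{thm63}; the only delicate point is justifying the Hadamard bound in the infinite-dimensional setting, which is routine within Serre's framework of completely continuous operators already invoked through \cite{YZ23}.
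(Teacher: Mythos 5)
Your proposal is correct and follows the same route as the paper's proof: reduce via $\bar{\beta}_{\infty,\kappa}=\bar{\beta}_{\infty,\kappa,1}^a$, apply the row estimate of Theorem \ref{thm63} on the weight-ordered basis of $H^1(\mathcal{S}(b,c),\partial)\cong\mathcal{R}(b,c)$, and invoke Dwork's Newton-polygon bound for Fredholm determinants of completely continuous operators; your generating-function identification of $h_i(n)$ as the number of basis vectors of weight $i$ is exactly the needed counting step. The one step you omit is the initial reduction to $\kappa\in\mathbb{Z}_p\setminus\mathbb{Z}_{\ge 0}$ by $p$-adic continuity of $L(\kappa,T)$ in $\kappa$, which is required because the symmetric-power basis $e_\kappa^{\underline{i}}$, the isomorphism $H^1\cong\mathcal{R}(b,c)$ of Theorem \ref{thm11}, and the estimate of Theorem \ref{thm63} are all established only for such $\kappa$.
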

 \begin{proof}
Note that $L(\kappa,T)$ is continuous with respect to $\kappa$.
It is enough to let $\kappa\in \mathbb{Z}_p\setminus \mathbb{Z}_{\ge 0}$.
By the above arguments, we have that 
$$L( Sym^{\kappa,\infty}{\rm Kl}_n,T)=\det(1-\bar{\beta}_{\infty,\kappa}T|H^{1}(\mathcal{S}(b,c),\partial))$$
and 
$$\bar{\beta}_{\infty,\kappa,1}^a=\bar{\beta}_{\infty,\kappa}.$$

On the other hand, we have
\begin{align}\label{eq44}
\det(1-\bar{\beta}_{\infty,\kappa}T^a|H^{1}(\mathcal{S}(b,c),\partial))&=\det (1-\bar{\beta}_{\infty,\kappa,1}^aT^a|H^{1}(\mathcal{S}(b,c),\partial))\\\nonumber
&=\prod_{\zeta^a=1} \det (1-\zeta\bar{\beta}_{\infty,\kappa,1}T|H^{1}(\mathcal{S}(b,c),\partial)).
\end{align}
Let $m_i$ denote the number of reciprocal roots of $\det (1-\bar{\beta}_{\infty,\kappa,1}T|H^{1}(\mathcal{S}(b,c),\partial))$ which has slope $s_i$. Then by (\ref{eq44}) we have that $\det(1-\bar{\beta}_{\infty,\kappa}T|H^{1}(\mathcal{S}(b,c),\partial))$ has $m_i$ reciprocal roots of $q$-adic slope $s_i$.
Hence Theorem \ref{thm632} follows from Theorem \ref{thm63}  and Dwork's argument \cite[Section 7]{Dw64}.
 \end{proof}

\section{The $k$-th symmetric power $L$-function}

As an application, we use the $q$-adic Newton polygon of $L( Sym^{k,\infty}{\rm Kl}_n,T)$ to establish a uniform lower bound for the $k$-th symmetric power $L$-function $L(Sym^k {\rm Kl}_n,T)$ on those segments with slopes $\le k$.

Define 
\begin{align*}
L(k,\bar{t}):=&\prod_{\bar{t}\in |\mathbb{G}_m/\mathbb{F}_q|}\prod_{i_1+\cdots+i_n>k}\\&\frac{1}{1-\pi_0(\bar{t})^{k-i_1-\cdots-i_n-\frac{2(k+1)}{n(n+1)}}\pi_1(\bar{t})^{i_1-\frac{2(k+1)}{n(n+1)}}\cdots\pi_n(\bar{t})^{i_n-\frac{2(k+1)}{n(n+1)}}(q^{k+1}T)^{\deg(\bar{t})}}.
\end{align*}
It follows from the functional equation $\pi_0(\bar{t})\pi_1(\bar{t})\cdots\pi_n(\bar{t})=q^{\frac{n(n+1)}{2}\deg(\bar{t})}$ that 
$$L( Sym^k{\rm Kl}_n,T)=\frac{L( Sym^{k,\infty}{\rm Kl}_n,T)}{L(k,\bar{t})}.$$
Note that  $i_1-\frac{2(k+1)}{n(n+1)}+\cdots+n(i_n-\frac{2(k+1)}{n(n+1)})\ge 0$.
Then the denominator $L(k,\bar{t})$ has all slope $\ge k+1$.
Then the Newton polygon of $L( Sym^k{\rm Kl}_n,T)$ coincides with the Newton polygon of $L( Sym^{k,\infty}{\rm Kl}_n,T)$ on the segments with slopes $\le k$. 

%Using $l$-adic methods, Fu and Wan \cite{FW1,FW3} proved that the $L$-function $L( Sym^k{\rm Kl}_2,T)$
%is a polynomial over $\mathbb{Z}$ with the form
%$$L( Sym^k{\rm Kl}_2,T)=P(k,T)\cdot M(k,T),$$
%where $P(k,T)$ is a polynomial over $\mathbb{Z}$ with slope $\le k$ and $M(k,T)$ is a polynomial over $\mathbb{Z}$ whose reciprocal roots are $q^{2k+1}$-Weil numbers.
%That is, polynomial $M(k,T)$ is pure of weight $2k+1$.
%Then Deligne's theorem\cite{De} shows that if $\alpha$ is the reciprocal root of $M(k,T)$, the $q^{2k+1}/\alpha$ is also a reciprocal root of $M(k,T)$.

%We have determined the lower bound of Newton polygon of $L( Sym^k{\rm Kl}_2,T)$ with slope $\le k$ and $P(k,T)$ is a polynomial over $\mathbb{Z}$ with slope $\le k$. It remains to determine the lower bound for $M(k,T)$. 
%We write 
%$$M(k,T)=(1-\alpha_1T)(1-\alpha_2T)\cdots (1-\alpha_dT)$$
%with $d$ is the degree of $M(k,T)$.
%We order $\alpha_1,\alpha_2,\ldots,\alpha_d$ such that 
%$${\rm ord}_q\alpha_1\le {\rm ord}_q\alpha_2\le\ldots\le {\rm ord}_q\alpha_d.$$

\begin{rmk}
In their study of the one-dimensional Kloosterman family, Fres\'{a}n-Sabbah-Yu's \cite{FSY} improved the $q$-adic lower bound of the $k$-th symmetric power $L$-function by eliminating the coefficient $1-1/(p-1)$ in the Hodge polygon. By taking the limit, it follows that the estimate for the infinite symmetric power $L$-function obtained by Haessig \cite{H17} can also be improved.
We conjecture that the same coefficient $1-1/(p-1)$ in the Hodge polygon also can be removed for the infinite symmetric power $L$-function of the hyper-Kloosterman family.
\end{rmk}
\begin{center}
{\sc Acknowledgements}
\end{center}
L.P. Yang is supported by National Natural Science Foundation of China (Grant No. 12201078).
We are grateful to Prof. Wan Daqing for many enlightening discussions. We are also grateful to Zhang Hao and Zhang Dingxin for their valuable suggestions.

\bibliographystyle{amsplain}

\end{document}